\def\Hom{\operatorname{Hom}}
\def\Ext{\operatorname{Ext}}
\def\Tor{\operatorname{Tor}}
\def\dim{\operatorname{dim}}
\def\proj{\operatorname{proj}}
\def\inj{\operatorname{inj}}
\newtheorem{thm}{Theorem}[section]
\newtheorem{lem}[thm]{Lemma}
\newtheorem{defn}[thm]{Definition}
\newtheorem{cor}[thm]{Corollary}
\newtheorem{prop}[thm]{Proposition}
\newtheorem{rem}[thm]{Remark}
\newtheorem{ex}[thm]{Example}
\title[Exceptional cycles in triangular matrix algebras]
{Exceptional cycles in triangular matrix algebras}
\author[Peng Guo]{Peng Guo \\ \\
Department of mathematics, Tsinghua University\\ Beijing 100084, China}
\thanks{Supported by National Natural Science Foundation of China (NSFC Grant No. 11971255).}
\thanks{Email:\  guop@mail.tsinghua.edu.cn}
\begin{document}

\begin{abstract} An exceptional cycle in a triangulated category with Serre functor
is a generalization of a spherical object. Suppose that $A$ and $B$ are Gorenstein algebras, given a perfect exceptional $n$-cycle $E_*$ in $K^b(A\mbox{-}{\rm proj})$ and a perfect exceptional $m$-cycle $F_*$ in $K^b(B\mbox{-}{\rm proj})$, we construct an $A$-$B$-bimodule $N$, and prove the product $E_*\boxtimes F_*$ is an exceptional $(n+m-1)$-cycle in $K^b(\Lambda\mbox{-}{\rm proj})$, where $\Lambda=\begin{pmatrix}A & N\\ 0 & B \end{pmatrix}$. Using this construction, one gets many new exceptional cycles which is unknown before for certain class of algebras.

\vskip5pt
\noindent Keywords: triangulated category with Serre functor, (perfect) exceptional cycle, triangular matrix algebra
\vskip5pt

\noindent 2010 Mathematics Subject Classification: Primary 16E30, 16G20, 16G70, 18G80
\end{abstract}

\maketitle

\vspace{-20pt}

\section{introduction}

Inspired by Kontsevich's homological mirror conjecture \cite{Ko} and the occurrence of certain braid actions in symplectic geometry, Seidel and Thomas
introduced the notion of spherical objects (they are mirror-symmetric analogues of Lagrangian spheres on a symplectic manifold, hence called spherical objects), and proved that it induced an auto-equivalence of the given triangulated category (see \cite{ST}).

Spherical objects were soon generalised to spherical functors (see \cite{R} and \cite{An}), and it was finally completed by Anno and Logvinenko for the dg setting \cite{AL}. An notable fact of spherical functors is that, the twist functor of a spherical functor is an auto-equivalence; conversely, any auto-equivalence of an (enhanced) triangulated category can be realised as the twist functor of a spherical functor \cite{Seg}. Spherical functors also appear in many areas, for example,
algebraic geometry (see \cite{Add} and \cite{D}), mathematical physics \cite{AA} and geometry invariant theory \cite{HLS}.

In a triangulated category $\mathcal T$ with Serre functor, Efimov \cite{E} and Broomhead, Pauksztello, Ploog \cite{BPP} introduced the notion of exceptional cycles (note that they were called spherical sequences in \cite{E}). Actually, exceptional $n$-cycles is a special case of spherical functors with the source triangulated category $D^b(k^n)$. Any exceptional cycle in $\mathcal T$ induces an auto-equivalence of $\mathcal T$, which is a generalization of tubular mutation \cite{M}. Given a triangulated category $\mathcal T$, it is always difficult to construct an auto-equivalence of $\mathcal T$. Thus it is important to determine all the exceptional cycle in $\mathcal T$. In a series of papers, all the exceptional cycles in several triangulated categories with Serre functor are classified, for example, the bounded derived category $D^b(kQ)$ of a finite acyclic quiver $Q$, the bounded homotopy category $K^b(A\mbox{-}{\rm proj})$ of a gentle algebra $A$, and certain triangulated categories arising from nilpotent representations (see \cite{GZ1}, \cite{GZ2}, \cite{GZ3}).

In this paper, we focus on constructing a new exceptional cycle from the old ones. Precisely, assume that $A$ and $B$ are Gorenstein algebras, given two exceptional cycles $(E_1, E_2, \cdots, E_n)$ and $(F_1, F_2, \cdots, F_m)$ in $K^b(A\mbox{-}{\rm proj})$ and $K^b(B\mbox{-}{\rm proj})$ respectively, we construct a new exceptional cycle in $K^b(\Lambda\mbox{-}{\rm proj})$, where $\Lambda$ is an upper triangular matrix algebra $\begin{pmatrix}A & N\\ 0&  B\end{pmatrix}$ with $N={_A}N_B$ an $A$-$B$-bimodule. In other words, given two auto-equivalences in $K^b(A\mbox{-}{\rm proj})$ and $K^b(B\mbox{-}{\rm proj})$ respectively, we construct an auto-equivalence of $K^b(\Lambda\mbox{-}{\rm proj})$ (since any exceptional cycle can induce an auto-equivalent in a given triangulated category).

\section{preliminary}

\subsection{Triangular matrix algebras}

Throughout, $k$ is an algebraically closed field, $A$ and $B$ are two finite dimensional $k$-algebras, $N={_A}N_B$ is an $A$-$B$-bimodule such that
$\begin{pmatrix}A & N\\ 0&  B\end{pmatrix}$ is a finite dimensional algebra, with addition and multiplication given by the ones of matrices. Denote by $A$-mod (resp. $B$-mod) the category of finitely generated left $A$-modules (resp. B-modules). A (finitely generated) left $\Lambda$-module is identified with a triple $\begin{pmatrix} X\\ Y\end{pmatrix}_{\phi}$, simply $\begin{pmatrix} X\\ Y\end{pmatrix}$ if $\phi$ is clear, where $X\in A$-mod, $Y\in B$-mod, and $\phi: N\otimes_{B}Y\longrightarrow X$ is an $A$-module homomorphism. A $\Lambda$-map $\begin{pmatrix} X\\ Y\end{pmatrix}_{\phi}\longrightarrow \begin{pmatrix} X'\\ Y'\end{pmatrix}_{\phi'}$ is identified with a pair $\begin{pmatrix}f \\ g \end{pmatrix}$, where $f\in \Hom_{A}(X, X'), g\in \Hom_{B}(Y, Y')$, such that the diagram
$$\xymatrix{
N\otimes_B Y \ar[r]^-{\phi} \ar[d]^{\text{Id}\otimes g}   &   X\ar[d]^{f}\\
N\otimes_B Y' \ar[r]^-{\phi'} &  X'}$$
commutes.
A sequence of $\Lambda$-modules
$$0\longrightarrow \begin{pmatrix} X\\ Y\end{pmatrix}_{\phi}\stackrel{\left(\begin{smallmatrix}f\\g\end{smallmatrix}\right)}{\longrightarrow}  \begin{pmatrix} X'\\ Y'\end{pmatrix}_{\phi'}\stackrel{\left(\begin{smallmatrix}f'\\g'\end{smallmatrix}\right)}{\longrightarrow} \begin{pmatrix} X''\\ Y''\end{pmatrix}_{\phi''}\longrightarrow 0$$
is exact if the sequence of $A$-modules $0\longrightarrow X \stackrel{f}\longrightarrow X' \stackrel{f'} \longrightarrow X''\longrightarrow 0$
and the sequence of $B$-modules $0\longrightarrow Y \stackrel{g}\longrightarrow Y' \stackrel{g'} \longrightarrow Y''\longrightarrow 0$ are exact.

The indecomposable projective $\Lambda$-modules are exactly $\begin{pmatrix}P\\0\end{pmatrix}$ and $\begin{pmatrix}N\otimes_B Q\\ Q\end{pmatrix}_{\text{Id}}$, where $P$ runs over indecomposable projective $A$-modules, and $Q$ runs over indecomposable projective $B$-modules.
The indecomposable injective $\Lambda$-modules are exactly $\begin{pmatrix}I\\ \Hom_{A}(N, I)\end{pmatrix}_{\Theta}$ and
$\begin{pmatrix}0\\ J\end{pmatrix}$, where $I$ runs over indecomposable injective $A$-modules, $J$ runs over indecomposable injective $B$-modules, and $\Theta: N\otimes_{B}\Hom_A(N, I)\longrightarrow I$ is an $A$-module homomorphism given by $\Theta (n\otimes f)=f(n), \forall\ n\in N, f\in\Hom_{A}(N, I)$.

\vskip10pt

The following lemma follows from \cite[p. 77]{ARS}.

\begin{lem} \label{nakayama}
Suppose that $A, B, \Lambda$ are algebras as above, $P$ an indecomposable projective $A$-module, $Q$ an indecomposable projective $B$-module. Then
$$\nu_{\Lambda}\begin{pmatrix}P\\0\end{pmatrix}\cong \begin{pmatrix}\nu_AP\\ \Hom_{A}(N, \nu_AP)\end{pmatrix}_{\Theta},\ \
\nu_{\Lambda}\begin{pmatrix}N\otimes_B Q\\ Q\end{pmatrix}_{\rm Id}\cong \begin{pmatrix}0\\ \nu_BQ\end{pmatrix},$$
where $\Theta : N\otimes_{B}\Hom_A(N, \nu_AP)\longrightarrow \nu_AP$ is an $A$-module homomorphism given by $\Theta (n\otimes f)=f(n), \forall\ n\in N, f\in\Hom_{A}(N, \nu_AP)$, and $\nu_A, \nu_B, \nu_{\Lambda}$ are the Nakayama functors of algebras $A, B, \Lambda$ respectively.
\end{lem}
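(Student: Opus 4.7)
The plan is to prove both isomorphisms by applying the definition $\nu_\Lambda = D\Hom_\Lambda(-,\Lambda)$ directly, using the standard fact that for a primitive idempotent $e\in\Lambda$ one has $\nu_\Lambda(\Lambda e) \cong D(e\Lambda)$. Write $P = Ae_i$ and $Q = Bf_j$ for primitive idempotents $e_i\in A$ and $f_j\in B$, and lift them to the primitive idempotents $\tilde e_i = \begin{pmatrix} e_i & 0\\ 0 & 0\end{pmatrix}$ and $\tilde f_j = \begin{pmatrix} 0 & 0\\ 0 & f_j\end{pmatrix}$ of $\Lambda$. Then $\Lambda\tilde e_i \cong \begin{pmatrix}P\\ 0\end{pmatrix}$ and $\Lambda\tilde f_j \cong \begin{pmatrix}N\otimes_B Q\\ Q\end{pmatrix}_{\rm Id}$, so the two statements reduce to computing $D(\tilde e_i\Lambda)$ and $D(\tilde f_j\Lambda)$ as left $\Lambda$-modules.

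For the first projective, a direct matrix computation shows that as a right $\Lambda$-module $\tilde e_i\Lambda$ is the pair $(e_iA,\, e_iN)$ with the evident right actions. Applying $D$ and analyzing the induced left $\Lambda$-action through the idempotents $\begin{pmatrix}1 & 0\\0 & 0\end{pmatrix}$ and $\begin{pmatrix}0 & 0\\0 & 1\end{pmatrix}$, the upper component is identified with $D(e_iA) = \nu_A P$ as an $A$-module, and the lower component with $D(e_iN)$ as a $B$-module. The canonical adjunction
\[
D(e_iN) = \Hom_k(e_iA\otimes_A N,\, k) \;\cong\; \Hom_A\bigl(N,\, \Hom_k(e_iA,k)\bigr) = \Hom_A(N,\, \nu_A P)
\]
gives the desired identification of the lower component. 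It remains to check that the structural map $\phi\colon N\otimes_B \Hom_A(N,\nu_AP) \to \nu_A P$ induced from the left action of $\begin{pmatrix}0 & n\\ 0 & 0\end{pmatrix}$ on $D(\tilde e_i\Lambda)$ agrees with $\Theta$: unwinding the adjunction, if $\hat f\in \Hom_A(N,\nu_A P)$ corresponds to $f\in D(e_iN)$, one verifies that $\begin{pmatrix}0 & n\\ 0 & 0\end{pmatrix}\cdot\hat f = \hat f(n)$, which is precisely $\Theta(n\otimes\hat f)$.

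For the second projective, the right $\Lambda$-module $\tilde f_j\Lambda$ satisfies $\begin{pmatrix}0 & 0\\ 0 & f_j\end{pmatrix}\begin{pmatrix}a & n\\ 0 & b\end{pmatrix} = \begin{pmatrix}0 & 0\\ 0 & f_jb\end{pmatrix}$, so it is concentrated in the lower-right corner and equals $f_jB$ as a right $B$-module. Taking $D$ yields $\nu_BQ$ in the lower slot; both the upper slot and the structural map vanish trivially because the right action of $\begin{pmatrix}1 & 0\\ 0 & 0\end{pmatrix}$ and of any $\begin{pmatrix}0 & n\\ 0 & 0\end{pmatrix}$ on $\tilde f_j\Lambda$ is zero. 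This gives the second isomorphism immediately.

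The only non-formal step is the identification in the first case: passing the right $\Lambda$-module structure on $\tilde e_i\Lambda$ through $k$-duality and then matching the resulting left action of the off-diagonal $N$ with the adjunction isomorphism $D(e_iN)\cong \Hom_A(N,\nu_AP)$. This is bookkeeping, but one must be careful with the direction of maps, and this is where I expect the bulk of the technical work to sit.
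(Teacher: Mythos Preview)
Your argument is correct: writing the indecomposable projectives as $\Lambda\tilde e_i$ and $\Lambda\tilde f_j$, using $\nu_\Lambda(\Lambda e)\cong D(e\Lambda)$, and then reading off the triple $\begin{pmatrix}X\\Y\end{pmatrix}_\phi$ from the left $\Lambda$-action on $D(\tilde e_i\Lambda)$ and $D(\tilde f_j\Lambda)$ works exactly as you describe, including the identification of the structural map with $\Theta$ via the tensor--Hom adjunction.

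The paper does not actually prove this lemma; it simply records that it follows from \cite[p.~77]{ARS}, where the indecomposable projective and injective $\Lambda$-modules are described. Since the Nakayama functor sends the indecomposable projective with top $S$ to the indecomposable injective with socle $S$, the lemma is immediate once one knows both lists. Your computation is the explicit version of that same fact, carried out directly from the definition $\nu_\Lambda = D\Hom_\Lambda(-,\Lambda)$ rather than by matching tops and socles; it has the advantage of making the structural map $\Theta$ visible without appealing to the classification of injectives.
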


\begin{lem} \label{C}\cite[Theorem 3.3]{C}
Assume that both $A$ and $B$ are Gorenstein algebras, $N={_A}N_B$ is an $A$-$B$-bimodule. Then $\begin{pmatrix}A &  N \\ 0 & B   \end{pmatrix}$ is Gorenstein if and only if ${_A}N$ and $N_B$ are finitely generated, $\proj.\dim{_A}N<\infty$ and $\proj.\dim N_B<\infty$.
\end{lem}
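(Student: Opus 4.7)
The plan is to use the criterion that $\Lambda$ is Gorenstein iff both $\inj.\dim\,{_\Lambda\Lambda}<\infty$ and $\inj.\dim\,\Lambda_\Lambda<\infty$, and to compute each via the triangular decomposition. For the ``if'' direction, I would decompose ${_\Lambda\Lambda}=\begin{pmatrix} A \\ 0 \end{pmatrix}\oplus \begin{pmatrix} N \\ B \end{pmatrix}_{\rm Id}$ and use the short exact sequence
$$0\longrightarrow \begin{pmatrix} N \\ 0 \end{pmatrix}\longrightarrow \begin{pmatrix} N \\ B \end{pmatrix}_{\rm Id}\longrightarrow \begin{pmatrix} 0 \\ B \end{pmatrix}\longrightarrow 0$$
to reduce the problem to computing $\inj.\dim_\Lambda\begin{pmatrix} X \\ 0 \end{pmatrix}$ for $X\in\{A,N\}$ together with $\inj.\dim_\Lambda\begin{pmatrix} 0 \\ B \end{pmatrix}$. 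The exact functor $Y\mapsto \begin{pmatrix}0 \\ Y\end{pmatrix}$ from $B$-modules to $\Lambda$-modules preserves injectives (by the classification of injective $\Lambda$-modules recalled before Lemma~\ref{nakayama}), so $\inj.\dim_\Lambda\begin{pmatrix}0 \\ B\end{pmatrix}=\inj.\dim_B B<\infty$. For a left $A$-module $X$ with $\inj.\dim_A X<\infty$, I would take an injective $A$-resolution $X\to I^\bullet$ and splice the short exact sequences
$$0\longrightarrow \begin{pmatrix} I^i \\ 0 \end{pmatrix}\longrightarrow \begin{pmatrix} I^i \\ \Hom_A(N,I^i) \end{pmatrix}_\Theta\longrightarrow \begin{pmatrix} 0 \\ \Hom_A(N,I^i) \end{pmatrix}\longrightarrow 0,$$
whose middle terms are injective $\Lambda$-modules, reducing the claim to finiteness of $\inj.\dim_B \Hom_A(N,I^i)$ for each $i$.

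The key technical ingredient is the tensor--Hom identification $\Hom_A(N,D(P_A))\cong D(P_A\otimes_A N)$ as left $B$-modules (with $D=\Hom_k(-,k)$), applicable because every injective left $A$-module has the form $D(P_A)$ for some finitely generated projective right $A$-module $P_A$. This gives $\inj.\dim_B \Hom_A(N,I)=\proj.\dim\,(P_A\otimes_A N)_B\leq \proj.\dim\,N_B<\infty$, the last inequality holding because $P_A$ is a direct summand of $A_A^m$ for some $m$. Combined with the observation that, $A$ being Gorenstein, $\proj.\dim\,{_AN}<\infty$ forces $\inj.\dim\,{_AN}<\infty$ (a standard consequence of finite $\inj.\dim\,{_AA}$), one obtains $\inj.\dim\,{_\Lambda\Lambda}<\infty$; the symmetric argument, interchanging left and right and the roles of $_AN$ and $N_B$, yields $\inj.\dim\,\Lambda_\Lambda<\infty$.

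For the converse, assume $\Lambda$ is Gorenstein. The finite $\Lambda$-injective resolution of $\begin{pmatrix} A \\ 0 \end{pmatrix}$ together with the description of injective $\Lambda$-modules restricts to a finite injective $A$-resolution of $_AA$, proving $A$ Gorenstein, and symmetrically for $B$. Running the same splicings in reverse forces $\inj.\dim_B \Hom_A(N,I^i)<\infty$ for each term $I^i$ appearing in a minimal injective $A$-resolution of $A$, which via the above tensor--Hom identification translates to $\proj.\dim\,N_B<\infty$; the right-module analysis then yields $\proj.\dim\,{_AN}<\infty$. Finite generation of $_AN$ and $N_B$ is automatic from $\dim_k\Lambda<\infty$. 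The main obstacle is the careful bookkeeping of dimension bounds through the splicings and the clean extraction, in the converse direction, of the precise finiteness conditions on $N$ from the abstract Gorensteinness of $\Lambda$.
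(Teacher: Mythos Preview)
The paper does not prove this lemma; it is quoted verbatim from Chen \cite[Theorem~3.3]{C} and used as a black box. So there is no ``paper's own proof'' to compare against.

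That said, a few comments on your proposal. Your argument for the ``if'' direction is sound: decomposing $_\Lambda\Lambda$, using the short exact sequence for $\begin{pmatrix}N\\B\end{pmatrix}_{\mathrm{Id}}$, the injectivity-preserving embedding $Y\mapsto\begin{pmatrix}0\\Y\end{pmatrix}$, and the tensor--Hom identification $\Hom_A(N,D(P_A))\cong D(P_A\otimes_A N)$ together with $\proj.\dim\,N_B<\infty$ all combine correctly to bound $\inj.\dim\,{_\Lambda\Lambda}$. The step that $\proj.\dim\,{_AN}<\infty$ implies $\inj.\dim\,{_AN}<\infty$ over a Gorenstein algebra is also fine.

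Your ``only if'' direction, however, has two issues. First, you spend effort showing $A$ and $B$ are Gorenstein, but this is already part of the hypothesis of the lemma. Second, and more seriously, ``running the splicings in reverse'' is not a valid deduction: knowing that $\begin{pmatrix}A\\0\end{pmatrix}$ has finite $\Lambda$-injective dimension does not a priori tell you that its injective resolution has the specific spliced shape you built in the forward direction, so you cannot directly read off $\inj.\dim_B\Hom_A(N,I^i)<\infty$. A cleaner route is: since $B$ is Gorenstein, $\begin{pmatrix}0\\B\end{pmatrix}$ has finite $\Lambda$-injective dimension, hence (as $\Lambda$ is Gorenstein) finite $\Lambda$-projective dimension; the short exact sequence $0\to\begin{pmatrix}N\\0\end{pmatrix}\to\begin{pmatrix}N\\B\end{pmatrix}_{\mathrm{Id}}\to\begin{pmatrix}0\\B\end{pmatrix}\to 0$ then gives $\proj.\dim_\Lambda\begin{pmatrix}N\\0\end{pmatrix}<\infty$, and since the exact, projective-preserving functor $X\mapsto\begin{pmatrix}X\\0\end{pmatrix}$ reflects projectivity (a $\Lambda$-projective with zero $B$-component is of the form $\begin{pmatrix}P\\0\end{pmatrix}$ with $P\in A\mbox{-}\mathrm{proj}$), this yields $\proj.\dim\,{_AN}<\infty$. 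The right-module version gives $\proj.\dim\,N_B<\infty$.
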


\begin{lem}\label{XZ}  \cite[Lemma 2.1]{XZ}
Suppose that $A, B, \Lambda$ are algebra as above, $X, X'\in A\mbox{-}{\rm mod}$ and $Y, Y'\in B\mbox{-}{\rm mod}$. Then

\vskip10pt

$(1)$ $\Ext^n_{\Lambda}(\begin{pmatrix}X'\\ 0\end{pmatrix}, \begin{pmatrix}X\\ Y\end{pmatrix}_{\phi})\cong \Ext^n_{A}(X', X),\ \forall\ n\ge 0.$

\vskip5pt

$(2)$ $\Ext^n_{\Lambda}(\begin{pmatrix}X\\ Y\end{pmatrix}_{\phi}, \begin{pmatrix}0\\ Y'\end{pmatrix})\cong \Ext^n_{B}(Y, Y'),\ \forall\ n\ge 0.$
\end{lem}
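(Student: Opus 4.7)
The plan is to compute the two $\Ext$ groups using carefully chosen resolutions that respect the block structure of $\Lambda$, exploiting the explicit descriptions of indecomposable projectives and injectives of $\Lambda$ recalled earlier in the preliminary section.

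For part $(1)$, my first step is to fix a projective resolution $\cdots \longrightarrow P_1 \longrightarrow P_0 \longrightarrow X' \longrightarrow 0$ of $X'$ in $A\mbox{-}{\rm mod}$. Since the excerpt identifies $\begin{pmatrix}P\\ 0\end{pmatrix}$ as an indecomposable projective $\Lambda$-module whenever $P$ is an indecomposable projective $A$-module (and arbitrary direct sums go through), applying the functor $\begin{pmatrix}-\\ 0\end{pmatrix}$ yields $\cdots \longrightarrow \begin{pmatrix}P_1\\ 0\end{pmatrix}\longrightarrow \begin{pmatrix}P_0\\ 0\end{pmatrix}\longrightarrow \begin{pmatrix}X'\\ 0\end{pmatrix}\longrightarrow 0$. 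Using the characterization of exactness of sequences of $\Lambda$-modules via componentwise exactness given above, this is a projective resolution of $\begin{pmatrix}X'\\ 0\end{pmatrix}$ in $\Lambda\mbox{-}{\rm mod}$. The next step is to identify, for a projective $A$-module $P$, the natural isomorphism $\Hom_{\Lambda}\bigl(\begin{pmatrix}P\\ 0\end{pmatrix}, \begin{pmatrix}X\\ Y\end{pmatrix}_{\phi}\bigr)\cong \Hom_A(P, X)$: a morphism $\begin{pmatrix}f\\ g\end{pmatrix}$ must have $g=0$, and then the compatibility square with $\phi$ holds automatically since the top-left entry $N\otimes_{B}0$ vanishes. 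Applying this identification to the resolution and passing to cohomology gives $\Ext^n_{\Lambda}\bigl(\begin{pmatrix}X'\\ 0\end{pmatrix}, \begin{pmatrix}X\\ Y\end{pmatrix}_{\phi}\bigr)\cong \Ext^n_A(X', X)$.

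For part $(2)$ I would dualize the argument using injective coresolutions in the second variable. Pick an injective coresolution $0 \longrightarrow Y' \longrightarrow J^0 \longrightarrow J^1 \longrightarrow \cdots$ in $B\mbox{-}{\rm mod}$. By the explicit description of indecomposable injective $\Lambda$-modules recalled in the preliminary, $\begin{pmatrix}0\\ J\end{pmatrix}$ is injective in $\Lambda\mbox{-}{\rm mod}$ whenever $J$ is injective in $B\mbox{-}{\rm mod}$. Applying the functor $\begin{pmatrix}0\\ -\end{pmatrix}$ to the coresolution and invoking the componentwise criterion for exactness yields an injective coresolution of $\begin{pmatrix}0\\ Y'\end{pmatrix}$ in $\Lambda\mbox{-}{\rm mod}$. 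Dually to part $(1)$, any morphism $\begin{pmatrix}X\\ Y\end{pmatrix}_{\phi}\longrightarrow \begin{pmatrix}0\\ J\end{pmatrix}$ has its first component equal to zero, so the compatibility square trivially commutes, yielding a natural isomorphism $\Hom_{\Lambda}\bigl(\begin{pmatrix}X\\ Y\end{pmatrix}_{\phi}, \begin{pmatrix}0\\ J\end{pmatrix}\bigr)\cong \Hom_B(Y, J)$. Taking cohomology of $\Hom_B(Y, J^\bullet)$ then recovers $\Ext^n_B(Y, Y')$.

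The only substantive verifications are the two $\Hom$ identifications and the fact that the candidate resolutions are genuinely projective/injective in $\Lambda\mbox{-}{\rm mod}$, both of which are immediate from the structure results already collected in the preliminary; no resolution of the mixed module $\begin{pmatrix}X\\ Y\end{pmatrix}_{\phi}$ (which would involve the bimodule $N$ nontrivially) is needed in either direction, and this is precisely what makes the argument painless. Thus I expect no genuine obstacle beyond bookkeeping with the $2\times 1$ matrix notation.
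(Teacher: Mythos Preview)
Your argument is correct and is exactly the standard one: lift a projective resolution of $X'$ over $A$ (respectively an injective coresolution of $Y'$ over $B$) to a resolution of the corresponding column module over $\Lambda$, and use the obvious identification of $\Hom$-spaces that comes from one component being zero. Note, however, that the paper does not actually prove this lemma; it simply quotes it from \cite[Lemma 2.1]{XZ}, so there is no in-paper proof to compare against. Your write-up supplies precisely the kind of short verification one would expect in that reference.
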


\subsection{Exceptional cycles}

Throughout this paper, $\mathcal T$ is a triangulated category {\it with Serre functor} $S$, i.e., $\mathcal{T}$ is a $k$-linear, Hom-finite, Krull-Schmidt triangulated category, with the shift functor denoted by $[1]: \mathcal{T}\longrightarrow\mathcal{T}$, and $S: \mathcal T\longrightarrow \mathcal T$ is an auto-equivalence such that for $X, Y\in\mathcal T$, there is a $k$-linear isomorphism $\Hom_{\mathcal T}(X, Y)\cong\Hom_{\mathcal T}(Y, S(X))^*$ which is functorial in $X $ and $Y$, where $(-)^*$ is the $k$-dual $\Hom_k(-, k)$. Then $S$ is a triangle functor (\cite {BK}, \cite[Appendix]{Boc}), and $\mathcal{T}$ has Auslander-Reiten triangles (\cite[p.31]{Ha}), with Auslander-Reiten translation denoted by $\tau$. Note that $S(X)\cong (\tau X)[1] \cong \tau (X[1])$ for $X\in \mathcal T$. See \cite[Theorem I.2.4]{RV}.

Let $d$ be an integer. A non-zero object $E$ of $\mathcal T$ is a {\it $d$-Calabi-Yau object}, if $S(E)\cong E[d]$. Note that a direct summand of a Calabi-Yau object is not necessarily Calabi-Yau (see \cite{CZ}).

\vskip5pt

For $X, Y\in \mathcal{T}$, let $\Hom^{\bullet}(X, Y)$ denote the complex of $k$-spaces with $\Hom^i(X, Y): = \Hom_{\mathcal{T}}(X, Y[i])$ and zero differentials. Write $\Hom^{\bullet}(X, Y)$ as $\bigoplus\limits_i \Hom_{\mathcal{T}}(X,Y[i])[-i]$. Put $\dim_k\Hom^{\bullet}(X, Y):=\sum\limits_i \dim_k\Hom_{\mathcal T}(X, Y[i])$.

\begin{defn} \ {\rm(\cite{BPP})}
{\it An exceptional $1$-cycle} in $\mathcal{T}$ is a $d$-Calabi-Yau object $E$, such that $\Hom^{\bullet}(E,E) \cong  k\oplus k[-d]$.
\end{defn}

\vskip5pt

We stress that an exceptional $1$-cycle is possibly decomposable, but this occurs if and only if $d=0$ and $\Hom_{\mathcal T}(E, E)\cong k\times k$ as algebras.

\begin{defn}\label{period}
A sequence $(E_1, E_2, \cdots, E_n)$ of objects of $\mathcal{T}$ is $S$-period with respect to $(d_1, d_2, \cdots, d_n)$, if there are integers $d_i$ such that $S(E_i)\cong E_{i+1}[d_i]$ for $1\le i\le n$, where $E_{n+1}:=E_1.$
\end{defn}

Note that the sequence $(d_1, d_2, \cdots, d_n)$ of integers in the definition is unique.

\begin{defn}{\rm(\cite{BPP})}\label{excep}
A sequence $(E_1, E_2, \cdots, E_n)$ of objects of $\mathcal{T}$ with $n\ge 2$ is an {\it exceptional $n$-cycle} with respect to a sequence of integers $(d_1, d_2, \cdots, d_n)$, if the conditions $({\rm E1})$, $({\rm E2})$ and $({\rm E3})$ are satisfied$:$

\vskip5pt

$({\rm E1})$ \ $\Hom^{\bullet}(E_i,E_i) \cong k$ for all $i;$

\vskip5pt

$({\rm E2})$ \ it is $S$-period with respect to $(d_1, d_2, \cdots, d_n)$;

\vskip 5pt

$({\rm E3})$ \ $\Hom^{\bullet}(E_i, E_j)=0$, unless $j=i$ or $j=i+1$.
$($This condition vanishes if $n = 2$.$)$
\end{defn}

In this case, $n$ is called {\it the length} of exceptional cycle $(E_1, E_2, \cdots, E_n)$. Note that each object in an exceptional $n$-cycle with $n\ge 2$ is indecomposable.

\vskip5pt

Sometimes it is easier to use the equivalent definition:

\vskip5pt

\noindent{\bf Definition \ref{excep}$'$}
A sequence $(E_1, E_2, \cdots, E_n)$ of objects of $\mathcal{T}$ with $n\ge 2$ is an {\it exceptional $n$-cycle} in $\mathcal{T}$,
if the conditions $({\rm E1'})$, $({\rm E2})$, and $({\rm E3'})$ are satisfied:

\vskip5pt

$({\rm E1'})$ \ $\Hom^{\bullet}(E_1,E_1) \cong k$;

\vskip 5pt

$({\rm E2})$ \ it is $S$-period with respect to $(d_1, d_2, \cdots, d_n)$;

\vskip 5pt

$({\rm E3'})$ \ $\Hom^{\bullet}(E_1, E_j)=0$ for $j\ge 3$.
$($This condition vanishes if $n = 2$.$)$

\vskip 5pt

\begin{thm}\cite[Lemma 1.5, Theorem 3.4]{Ha}
Let $A$ be a finite dimensional algebra. Then $A$ is Gorenstein if and only if $K^b(A\mbox{-}{\rm proj})=K^b(A\mbox{-}{\rm inj})$ in $D^b(A)$. In this case, $K^b(A\mbox{-}{\rm proj})$ has Serre functor $S$ which is induced by the Nakayama functor $\nu_A$.
\end{thm}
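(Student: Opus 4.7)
The plan is to establish the biconditional in two directions and then identify the Serre functor. First, I would unpack the definition: $A$ is Gorenstein iff both ${}_AA$ and $A_A$ have finite injective dimension. For the forward direction, finite injective dimension of ${}_AA$ implies that every finitely generated projective left $A$-module admits a bounded injective resolution (being a summand of a finite direct sum of copies of ${}_AA$), so every stalk complex of such a projective is quasi-isomorphic in $D^b(A)$ to a bounded complex of injectives; hence $K^b(A\mbox{-}{\rm proj}) \subseteq K^b(A\mbox{-}{\rm inj})$. Dually, finite injective dimension of $A_A$ translates via the $k$-duality $D = \Hom_k(-, k)$ to finite projective dimension of the left module $D(A_A)$, and since every indecomposable injective left $A$-module is a direct summand of $D(A_A)$, the reverse inclusion $K^b(A\mbox{-}{\rm inj}) \subseteq K^b(A\mbox{-}{\rm proj})$ follows.

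For the converse, if $K^b(A\mbox{-}{\rm proj}) = K^b(A\mbox{-}{\rm inj})$ in $D^b(A)$, then applying the equality to the stalk complex ${}_AA \in K^b(A\mbox{-}{\rm proj})$ shows that ${}_AA$ admits a bounded injective resolution, so $\inj.\dim{}_AA < \infty$. Applying the reverse inclusion to the injective left module $D(A_A)$ shows it has finite projective dimension, hence $A_A$ has finite injective dimension as a right module, completing the biconditional.

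For the Serre functor, I would use the classical fact that the Nakayama functor $\nu_A = D\Hom_A(-, A)$ restricts to an exact equivalence of additive categories $A\mbox{-}{\rm proj} \xrightarrow{\sim} A\mbox{-}{\rm inj}$, with quasi-inverse $\Hom_A(DA, -)$. Termwise extension yields a triangle equivalence $K^b(A\mbox{-}{\rm proj}) \xrightarrow{\sim} K^b(A\mbox{-}{\rm inj})$, and composing with the equality established above produces a triangle auto-equivalence $S$ of $K^b(A\mbox{-}{\rm proj})$. To verify that $S$ is a Serre functor, I would start from the module-level Auslander-Reiten formula $\Hom_A(P, M) \cong D\Hom_A(M, \nu_A P)$, natural in the finitely generated projective $P$ and arbitrary module $M$, and extend it to total $\Hom$ complexes to obtain a natural isomorphism $\Hom^{\bullet}(X, Y) \cong D\Hom^{\bullet}(Y, SX)$ for $X, Y \in K^b(A\mbox{-}{\rm proj})$.

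The main technical obstacle lies in this final step: transporting the module-level duality through bounded complexes and then to the homotopy category, verifying compatibility with the shift functor, with cones, and with the sign conventions in the total $\Hom$ complex, so that the module-level isomorphism descends to genuine $\Hom$-groups in $K^b(A\mbox{-}{\rm proj})$ (in particular passing to $H^0$ of the total $\Hom$ complex) rather than only to the level of total $\Hom$-complexes.
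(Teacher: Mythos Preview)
The paper does not prove this theorem; it is quoted from Happel's paper \cite[Lemma 1.5, Theorem 3.4]{Ha} as a preliminary result, with no argument given. So there is no proof in the paper to compare your proposal against.

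That said, your outline is the standard argument and is essentially correct. Two small points are worth tightening. First, in the forward direction you pass from ``every projective module has a bounded injective resolution'' to $K^b(A\mbox{-}{\rm proj}) \subseteq K^b(A\mbox{-}{\rm inj})$ by mentioning only stalk complexes; to close this, note that $K^b(A\mbox{-}{\rm inj})$ is a thick triangulated subcategory of $D^b(A)$ and $K^b(A\mbox{-}{\rm proj})$ is the thick subcategory generated by the stalk complex ${}_AA$, so it suffices that ${}_AA$ lies in $K^b(A\mbox{-}{\rm inj})$. The dual remark applies to the reverse inclusion. Second, for the Serre functor, the cleanest way to avoid the sign and compatibility issues you flag is to use the description $\nu_A \cong DA \otimes_A -$ on projectives and then invoke derived tensor--Hom adjunction: for $X, Y \in K^b(A\mbox{-}{\rm proj})$ one has
\[
\Hom_{D^b(A)}(X, Y) \cong \Hom_{D^b(k)}(A \otimes_A^{\mathbf L} X, Y) \cong \Hom_{D^b(k)}(X, \mathbf{R}\Hom_A(A, Y)),
\]
and dually $D\Hom_{D^b(A)}(Y, DA\otimes_A^{\mathbf L} X)$ reduces to the same space via the standard adjunctions; this packages the naturality and triangle-compatibility for free and bypasses the termwise sign bookkeeping.
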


\section{main result}

\subsection{Perfect exceptional cycles}

\begin{defn} \label{perfect}
Let $A$ be a finite dimensional algebra, a sequence $(E_1, E_2, \cdots, E_n)$ of objects of $K^b(A\mbox{-}{\rm proj})$ with $n\ge1$ is called perfect if each $E_i$ is an $A$-module for all $1\le i\le n$.
\end{defn}

\begin{ex}
$(1)$ If $A=kQ$ with $Q$ an acyclic finite quiver, then after adjusting shift in each position, every exceptional cycle in $K^b(A\mbox{-}{\rm proj})$ can become a perfect exceptional cycle.

$(2)$ If $A = kQ/I$ is a finite dimensional gentle algebra with $A\ne k$, where $Q$ is a finite connected quiver such that the underlying graph of $Q$ is not of type $\mathbb A_3$, then after adjusting shift in each position, every exceptional cycle in $K^b(A\mbox{-}{\rm proj})$ can be a perfect exceptional cycle. Actually, according to \cite[Theorem 1.4]{GZ2}, each object in an exceptional cycle in $K^b(A\mbox{-}{\rm proj})$ is a string complex at the mouth. By \cite[Lemma 2.4]{Bob}, every string complex at the mouth is a shift of an $A$-module.

$(3)$\ If\ $A=k(1\stackrel{\alpha}\rightarrow2\stackrel{\beta}\rightarrow3)/\langle \beta\alpha\rangle$, then check that $(P(2)\rightarrow P(1), P(3)\rightarrow P(2))$ is an exceptional $2$-cycle in $K^b(A\mbox{-}{\rm proj})$, but no matter how to adjust shift in each position, it would not become a perfect exceptional cycle in $K^b(A\mbox{-}{\rm proj})$.
\end{ex}

\begin{lem} \label{iff1}
Let $A$ be a Gorenstein algebra, a $d$-Calabi-Yau object $E$ in $K^b(A\mbox{-}{\rm proj})$ is a perfect exceptional $1$-cycle if and only if $E$ is an $A$-module and
$${\rm when}\ d\ne 0,\ \Ext^t_A(E, E)\cong
\begin{cases}
 k, &  {\rm if}\ t=0;\\
 k, &  {\rm if}\ t=d;  \\
 0, &  {\rm otherwise.}
\end{cases}\ \ \ \ \ \ (i)$$
and $${\rm when}\  d=0,\ \Ext^t_A(E, E)\cong
\begin{cases}
k^2, &  {\rm if}\ t=0;  \\
0,  &  {\rm otherwise.}
\end{cases}\ \ \ \ \ \ (ii)$$
\end{lem}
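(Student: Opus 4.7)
The plan is to reduce everything to the identification of graded Hom in $K^b(A\mbox{-}{\rm proj})$ with Ext over $A$ for modules. Since $A$ is Gorenstein and $E\in K^b(A\mbox{-}{\rm proj})$, any $A$-module representative of $E$ has finite projective dimension and is represented in $K^b(A\mbox{-}{\rm proj})$ by its bounded projective resolution. Because $K^b(A\mbox{-}{\rm proj})$ embeds fully faithfully into $D^b(A)$, for every integer $t$ one has
$$\Hom_{K^b(A\mbox{-}{\rm proj})}(E, E[t])\cong \Hom_{D^b(A)}(E,E[t])\cong \Ext^t_A(E,E),$$
with the right-hand side vanishing for $t<0$.

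For the forward direction, assume $E$ is a perfect exceptional $1$-cycle. By Definition \ref{perfect}, $E$ is an $A$-module; by the definition of exceptional $1$-cycle, $\Hom^{\bullet}(E, E)\cong k\oplus k[-d]$. Reading off the cohomological grading degree by degree via the displayed isomorphism produces $(i)$ when $d\ne 0$; in the case $d=0$ the two summands both lie in degree $0$, forcing $\Ext^0_A(E, E)\cong k^2$ with all other Ext groups vanishing, i.e.~$(ii)$.

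Conversely, assume $E$ is a $d$-Calabi--Yau $A$-module satisfying the stated Ext conditions. Reassembling via the same isomorphism yields $\Hom^{\bullet}(E, E)\cong k\oplus k[-d]$ (treating the $d=0$ case as above), and combined with the hypothesis $S(E)\cong E[d]$ this is exactly the definition of an exceptional $1$-cycle. Since $E$ is by assumption an $A$-module, it is perfect by Definition \ref{perfect}.

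There is no substantial obstacle here; the only point that deserves care is the $d=0$ case, where $k\oplus k[-d]=k^2$ sits entirely in cohomological degree zero, which is precisely the source of the dichotomy between cases $(i)$ and $(ii)$ in the statement.
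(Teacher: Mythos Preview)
Your proof is correct and follows essentially the same approach as the paper: both arguments hinge on the identification $\Hom_{K^b(A\mbox{-}{\rm proj})}(E,E[t])\cong\Hom_{D^b(A)}(E,E[t])\cong\Ext^t_A(E,E)$ for an $A$-module $E$, and both read off the Ext conditions from $\Hom^\bullet(E,E)\cong k\oplus k[-d]$ degree by degree, treating the $d=0$ case separately. The only difference is order of presentation (you do the ``only if'' direction first, the paper does ``if'' first), which is immaterial.
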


\begin{proof}
Assume that $E$ is an $A$-module and satisfies $(i)$ and $(ii)$, then
\begin{align*}
\Hom^{\bullet}(E, E)& =\bigoplus\limits_t \Hom_{K^b(A\mbox{-}{\rm proj})}(E,E[t])[-t]\\
&\cong\bigoplus\limits_t \Hom_{D^b(A)}(E,E[t])[-t]\\
&\cong \bigoplus\limits_t \Ext^t_A(E, E)[-t]\\
&\cong k\oplus k[-d].
\end{align*}
Notice that the last equation holds whether $d=0$ or not, thus $E$ is a perfect exceptional $1$-cycle.

On the other hand, if $E$ is a perfect exceptional $1$-cycle, then $E$ is an $A$-module and $\Hom^{\bullet}(E, E)=k\oplus k[-d]$.

When $d=0$, it follows that $\Hom^{\bullet}(E, E)\cong k^2\cong \bigoplus\limits_t \Ext^t_A(E, E)[-t]$. Thus one has
$$\Ext^t_A(E, E)\cong
\begin{cases}
k^2, &  {\rm if}\ t=0;  \\
0, &  {\rm otherwise.}
\end{cases}$$

When $d\ne0$, it follows that $\Hom^{\bullet}(E, E)\cong k\oplus k[-d]\cong \bigoplus\limits_t \Ext^t_A(E, E)[-t]$. Hence
$$\Ext^t_A(E, E)\cong
\begin{cases}
k,  &  {\rm if}\ t=0;\\
k,  &  {\rm if}\ t=d;  \\
0, &  {\rm otherwise.}
\end{cases}$$
\end{proof}

\begin{lem}  \label{iff2}
Suppose that $A$ is a Gorenstein algebra, the sequence $(E_1, E_2, \cdots, E_n)$ in $K^b(A\mbox{-}{\rm proj})$ is $S$-period with respect to $(d_1, d_2, \cdots, d_n)$.
Then the following are equivalent:

$(i)$\  $(E_1, E_2, \cdots, E_n)$ is a perfect exceptional $n$-cycle;

$(ii)$\  Each $E_i$ is an $A$-module for $1\le i\le n$, and
$$\Ext^t_A(E_i, E_j)\cong
\begin{cases}
k, & {\rm if}\ j=i,\ t=0; \\
k,  & {\rm if}\  j=i+1,\ t=d_i;\\
0, & {\rm otherwise.}
\end{cases}$$

$(iii)$\  Each $E_i$ is an $A$-module for $1\le i\le n$, and
$$\Ext^t_A(E_1, E_j)\cong
\begin{cases}
k, & {\rm if}\ j=1,\ t=0; \\
k,  & {\rm if}\  j=2,\ t=d_1;\\
0, & {\rm otherwise.}
\end{cases}$$
\end{lem}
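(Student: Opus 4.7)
The plan is to translate conditions on the graded space $\Hom^{\bullet}(E_i, E_j)$ into conditions on the groups $\Ext^t_A(E_i, E_j)$, then read off the equivalences from Definitions \ref{excep} and \ref{excep}$'$. The only nontrivial input will be Serre duality, used to handle the case $j = i+1$.

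The key observation is that, for $A$-modules $E, F$ viewed inside $K^b(A\text{-}\mathrm{proj})$,
$$\Hom^{\bullet}(E, F) \;\cong\; \bigoplus_t \Ext^t_A(E, F)[-t],$$
because the natural functor $K^b(A\text{-}\mathrm{proj}) \to D^b(A)$ is fully faithful and morphisms in $D^b(A)$ between modules placed in degree $0$ compute Ext groups. Under this dictionary, $(\mathrm{E}1)$ is equivalent to $\Ext^0_A(E_i, E_i) = k$ together with $\Ext^{t}_A(E_i, E_i) = 0$ for $t \neq 0$, and similarly $(\mathrm{E}3)$, $(\mathrm{E}1')$, $(\mathrm{E}3')$ translate into the Ext vanishings appearing in $(ii)$ and $(iii)$.

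I will prove the circle $(i) \Rightarrow (ii) \Rightarrow (iii) \Rightarrow (i)$. The middle step is a trivial restriction to $i = 1$. For $(iii) \Rightarrow (i)$, the translation turns the hypotheses of $(iii)$ into $(\mathrm{E}1')$ and $(\mathrm{E}3')$, and $(\mathrm{E}2)$ is already assumed, so $(E_1, \ldots, E_n)$ is a perfect exceptional $n$-cycle by Definition \ref{excep}$'$.

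The crux is $(i) \Rightarrow (ii)$. The diagonal case $j = i$ and the vanishing case $j \neq i, i+1$ fall out of $(\mathrm{E}1)$ and $(\mathrm{E}3)$ via the translation. The remaining case $j = i+1$ is not part of the axioms and must be derived using Serre duality combined with $SE_i \cong E_{i+1}[d_i]$:
$$\Hom(E_i, E_{i+1}[t])^* \;\cong\; \Hom(E_{i+1}[t], SE_i) \;=\; \Hom(E_{i+1}, E_{i+1}[d_i - t]).$$
Applying $(\mathrm{E}1)$ at position $i+1$ to the right-hand side gives $k$ when $t = d_i$ and $0$ otherwise, whence $\Ext^{d_i}_A(E_i, E_{i+1}) \cong k$ with all other $\Ext$'s vanishing. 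This Serre-duality move, which crucially uses that $(\mathrm{E}1)$ holds at every index and not merely at $i = 1$, is the main obstacle; once it is in place, the remainder of the argument is bookkeeping.
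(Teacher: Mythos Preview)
Your proposal is correct and follows essentially the same route as the paper: the same cycle $(i)\Rightarrow(ii)\Rightarrow(iii)\Rightarrow(i)$, the same translation $\Hom^{\bullet}(E_i,E_j)\cong\bigoplus_t\Ext^t_A(E_i,E_j)[-t]$ via the full embedding $K^b(A\text{-}\mathrm{proj})\hookrightarrow D^b(A)$, the same Serre-duality computation $\Ext^t_A(E_i,E_{i+1})\cong D\Ext^{d_i-t}_A(E_i,E_i)$ for the $j=i+1$ case, and the same appeal to Definition~\ref{excep}$'$ for $(iii)\Rightarrow(i)$.
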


\begin{proof}
$(i)\Rightarrow(ii)$ According to the definition of a perfect exceptional cycle, then each $E_i$ is an $A$-module for all $1\le i\le n$, and $\Hom^{\bullet}(E_i, E_i)=k$ for all $1\le i\le n$, and $\Hom^{\bullet}(E_i, E_j)=0$, unless $j=i$ or $j=i+1$. Then
\begin{align*}
k\cong\Hom^{\bullet}(E_i, E_i)& =\bigoplus\limits_t \Hom_{K^b(A\mbox{-}{\rm proj})}(E_i,E_i[t])[-t]\\
&\cong\bigoplus\limits_t \Hom_{D^b(A)}(E_i,E_i[t])[-t]\\
&\cong \bigoplus\limits_t \Ext^t_A(E_i, E_i)[-t].
\end{align*}
It follows that $$\Ext^t_A(E_i, E_i)\cong
\begin{cases}
k, & {\rm if}\ t=0; \\
0,  & {\rm otherwise}.
\end{cases}$$
Thus
\begin{align*}
\Ext^t_A(E_i, E_{i+1})& \cong\Hom_{K^b(A\mbox{-}{\rm proj})}(E_i,E_{i+1}[t])\\
&\cong\Hom_{K^b(A\mbox{-}{\rm proj})}(E_i,S(E_i)[t-d_i])\\
&\cong\Hom_{K^b(A\mbox{-}{\rm proj})}(E_i,S(E_i[t-d_i]))\\
&\cong D\Hom_{K^b(A\mbox{-}{\rm proj})}(E_i[t-d_i],E_i)\\
&\cong D\Hom_{K^b(A\mbox{-}{\rm proj})}(E_i,E_i[d_i-t])\\
&\cong D\Ext^{d_i-t}_A(E_i, E_i)\\
&\cong
\begin{cases}
k, & {\rm if}\ t=d_i; \\
0,  & {\rm otherwise}.\end{cases}
\end{align*}

If $j\ne i$ and $j\ne i+1$, then $0=\Hom^{\bullet}(E_i, E_j)=\bigoplus\limits_t \Ext^t_A(E_i, E_j)[-t]$.
All together, we have
$$\Ext^t_A(E_i, E_j)\cong
\begin{cases}
k, & {\rm if}\ j=i,\ t=0; \\
k,  & {\rm if}\  j=i+1,\ t=d_i;\\
0, & {\rm otherwise.}\end{cases}$$

$(ii)\Rightarrow(iii)$\ It is clear.

$(iii)\Rightarrow(i)$\ If each $E_i$ is an $A$-module for $i\ge1$, then
\begin{align*}
\Hom^{\bullet}(E_1, E_j)& \cong \bigoplus\limits_t \Ext^t_A(E_1, E_j)[-t]\\
&\cong
\begin{cases}
k, & {\rm if}\ j=1; \\
k[-d_1],  & {\rm if}\  j=2;\\
0, & {\rm otherwise.}
\end{cases}
\end{align*}
By Definition \ref{excep}$'$, $(E_1, E_2, \cdots, E_n)$ is a perfect exceptional $n$-cycle.
\end{proof}

\subsection{Serre functor for triangular matrix algebras}

Throughout this section, assume that $A$ and $B$ are two finite dimensional Gorenstein algebras, $\Lambda=\begin{pmatrix}A & N\\ 0 & B \end{pmatrix}$, where $N$ is an $A$-$B$-bimodule. The Serre functor in $K^b(A\mbox{-}{\rm proj})$ (resp. $K^b(B\mbox{-}{\rm proj}), K^b(\Lambda\mbox{-}{\rm proj})$) is denoted by $S_A$ (resp. $S_B, S_{\Lambda}$), and the Nakayama functor is denote by $\nu_A$ (resp. $\nu_B, \nu_{\Lambda}$).

\begin{lem}
Let $A$ and $B$ be Gorenstein algebras, $(E_1, E_2, \cdots, E_n)$  a perfect exceptional $n$-cycle in $K^b(A\mbox{-}{\rm proj})$,
$(F_1, F_2, \cdots, F_m)$ a perfect exceptional $m$-cycle in $K^b(B\mbox{-}{\rm proj})$.
Then $\Lambda=\begin{pmatrix}A & E_n\otimes_k D(F_1)\\ 0 & B \end{pmatrix}$ is Gorenstein.
Consequently, $K^b(\Lambda\mbox{-}{\rm proj})$ has Serre functor.
\end{lem}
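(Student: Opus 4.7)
The plan is to invoke Lemma \ref{C} on the bimodule $N = E_n\otimes_k D(F_1)$ and then deduce the Serre-functor statement from the theorem preceding Definition \ref{perfect}. So I need to verify the three conditions of Lemma \ref{C}: that ${}_AN$ and $N_B$ are finitely generated, and that $\proj.\dim\,{}_AN<\infty$ and $\proj.\dim\,N_B<\infty$.

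First I would record the consequences of perfectness. Since $E_n$ lies in $K^b(A\mbox{-}{\rm proj})$ and is an $A$-module, $E_n$ has finite projective dimension over $A$, and as a finitely generated module over a finite-dimensional algebra it is finite-dimensional over $k$; the same holds for $F_1$ over $B$, so $D(F_1)=\Hom_k(F_1,k)$ is a finite-dimensional right $B$-module. The key structural observation is that since the $A$-action on $N$ touches only the first tensor factor, choosing a $k$-basis of $D(F_1)$ gives an isomorphism of left $A$-modules ${}_AN\cong E_n^{\dim_k F_1}$, and symmetrically $N_B\cong D(F_1)^{\dim_k E_n}$. Hence ${}_AN$ and $N_B$ are finitely generated, with $\proj.\dim\,{}_AN=\proj.\dim\,{}_AE_n<\infty$ and $\proj.\dim\,N_B=\proj.\dim\,D(F_1)_B$.

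The only step with any substance is bounding $\proj.\dim\,D(F_1)_B$. Here I would use that $B$ is Gorenstein: by the theorem preceding Definition \ref{perfect}, $K^b(B\mbox{-}{\rm proj})=K^b(B\mbox{-}{\rm inj})$ inside $D^b(B)$, so the $B$-module $F_1$ also has finite injective dimension on the left. Applying the exact $k$-duality $D$, which interchanges left and right modules and sends injectives to projectives, we conclude that $D(F_1)$ is a right $B$-module of finite projective dimension.

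With all three hypotheses of Lemma \ref{C} verified, $\Lambda$ is Gorenstein, and then the cited theorem immediately gives that $K^b(\Lambda\mbox{-}{\rm proj})$ admits a Serre functor (induced by the Nakayama functor $\nu_\Lambda$). I do not expect any genuine obstacle; the only point that is not entirely formal is the Gorenstein-mediated passage from ``finite projective dimension of $F_1$ on the left'' to ``finite projective dimension of $D(F_1)$ on the right,'' which is exactly where the Gorenstein hypothesis on $B$ enters.
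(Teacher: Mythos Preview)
Your proposal is correct and follows essentially the same approach as the paper: both verify the hypotheses of Lemma~\ref{C} by observing that $E_n$ has finite projective dimension over $A$, that $F_1$ has finite injective dimension over $B$ (via $K^b(B\mbox{-}\mathrm{proj})=K^b(B\mbox{-}\mathrm{inj})$), and hence that $D(F_1)$ has finite projective dimension as a right $B$-module, with the tensor-over-$k$ structure reducing everything to these two facts. Your write-up is slightly more explicit about finite generation and the direct-sum decompositions ${}_AN\cong E_n^{\dim_k F_1}$ and $N_B\cong D(F_1)^{\dim_k E_n}$, but this is the same argument.
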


\begin{proof}
Since $E_n\in K^b(A\mbox{-}\proj)$, we have $\proj.\dim {_A}(E_n)<\infty$. It follows from $F_1\in  K^b(A\mbox{-}\proj)= K^b(A\mbox{-}{\rm inj})$ that
$\inj.\dim {_B}(F_1)<\infty$. Hence $\proj.\dim\ (D(F_1))_B<\infty$. Then one has $$\proj.\dim {_A}(E_n\otimes_k D(F_1))=\proj.\dim {_A}(E_n)<\infty$$ and
$$\proj.\dim (E_n\otimes_k D(F_1))_B=\proj.\dim\ (D(F_1))_B<\infty.$$
By Lemma \ref{C}, $\Lambda$ is Gorenstein.
\end{proof}

\begin{lem} \label{interpret}
Let $A$ be a Gorenstein algebra, $X, Y\in A\mbox{-}{\rm mod}$ such that $S_A(X)\cong Y[d]$ in $K^b(A\mbox{-}{\rm proj})$, where
$S_A$ is the Serre functor in $K^b(A\mbox{-}{\rm proj})$. Then ${\rm proj.}\dim X=d={\rm inj.}\dim Y$.
\end{lem}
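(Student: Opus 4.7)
The plan is to exploit the uniqueness (up to isomorphism of complexes) of minimal injective resolutions and to track where a minimal projective resolution lives after applying $\nu_A$. Write $p := {\rm proj.}\dim X$ and $q := {\rm inj.}\dim Y$; both are finite because $X\in A\mbox{-mod}$ lies in $K^b(A\mbox{-proj})$ and $Y\in A\mbox{-mod}$ lies in $K^b(A\mbox{-proj})=K^b(A\mbox{-inj})$ (using that $A$ is Gorenstein). Our goal is to show $p=d=q$.

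First I would take a \emph{minimal} projective resolution $P^{\bullet}\colon 0\to P^{-p}\to\cdots\to P^0\to 0$ of $X$, so that $P^{-p}\ne 0$, $P^0\ne 0$, and each differential has image in the radical. Since $S_A$ is induced on $K^b(A\mbox{-proj})$ by the Nakayama functor $\nu_A$, one has $S_A(X)\cong \nu_A(P^{\bullet})$ in $K^b(A\mbox{-inj})$. The key structural fact is that $\nu_A\colon A\mbox{-proj}\to A\mbox{-inj}$ is an equivalence of additive categories which, via the identification $\Hom_A(P(i),P(j))\cong e_jAe_i\cong\Hom_A(I(i),I(j))$, sends radical maps to radical maps. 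Consequently $\nu_A(P^{\bullet})$ is a \emph{minimal} bounded complex of injectives, concentrated exactly in degrees $[-p,0]$ with nonzero end terms.

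On the other hand, take a minimal injective resolution $J^{\bullet}\colon 0\to Y\to J^0\to\cdots\to J^q\to 0$; then $J^{\bullet}[d]$ is a minimal complex of injectives concentrated exactly in degrees $[-d,q-d]$ with nonzero end terms, and it represents $Y[d]$ in $K^b(A\mbox{-inj})$. Since $K^b(A\mbox{-inj})\hookrightarrow D^b(A)$ is fully faithful, the hypothesis $S_A(X)\cong Y[d]$ gives an isomorphism $\nu_A(P^{\bullet})\cong J^{\bullet}[d]$ inside $K^b(A\mbox{-inj})$, i.e.\ a homotopy equivalence.

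Finally, I would invoke the standard fact that two minimal bounded complexes of injectives which are homotopy equivalent are isomorphic as complexes (the Fitting-type argument: any homotopy equivalence between minimal complexes is automatically an isomorphism termwise, since otherwise one could split off a contractible summand, contradicting minimality). Matching the extremal nonzero degrees on both sides of this isomorphism yields $-p=-d$ and $0=q-d$, hence ${\rm proj.}\dim X=p=d=q={\rm inj.}\dim Y$, as desired. The only nontrivial step is the preservation of minimality under $\nu_A$ and the rigidity of minimal injective complexes; once these are in place the conclusion is a bookkeeping of degrees.
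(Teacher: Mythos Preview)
Your proof is correct and follows essentially the same approach as the paper: both take a minimal projective resolution of $X$, apply $\nu_A$, use that $\nu_A$ preserves minimality, and compare with a minimal injective resolution of $Y$ to read off the dimensions. Your packaging is slightly slicker in that you invoke the rigidity of minimal complexes once to match both extremal degrees simultaneously, whereas the paper first pins down $d={\rm proj.}\dim X$ via a cohomology-and-splitting argument at the left end and then separately argues minimality at the right end to get ${\rm inj.}\dim Y=d$.
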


\begin{proof}
Assume that ${\rm proj.}\dim X=n$. Take a minimal projective resolution of $X$
$$P_{\bullet}:\ \ \ \ \ \ \ \ 0\longrightarrow P_n \stackrel{\alpha_n}{\longrightarrow} P_{n-1}\longrightarrow \cdots \longrightarrow P_1\stackrel{\alpha_1}{\longrightarrow} P_0 \longrightarrow 0,$$
where $\alpha_n$ is not a splitting monomorphism. Applying the Nakayama functor $\nu_A$ to $P_{\bullet}$, one has
$$\nu_A P_{\bullet}:\ \ \ \ \ \ \ \ 0\longrightarrow \nu_A P_n \stackrel{\nu_A \alpha_n}{\longrightarrow} \nu_A P_{n-1}\longrightarrow \cdots \longrightarrow \nu_A P_1\stackrel{\nu_A \alpha_1}{\longrightarrow} \nu_A P_0 \longrightarrow 0.$$

Since $S_A(X)\cong S_A(P_{\bullet})\cong \nu_A P_{\bullet}\cong Y[d]$ in $K^b(A\mbox{-}{\rm proj})$ and $Y[d]$ is a stalk complex, hence
$${\rm H}_i(\nu_A P_{\bullet})\cong {\rm H}_i(Y[d])\cong
\begin{cases}
Y, & {\rm if}\ i=-d;\\
0, &  {\rm otherwise.}
\end{cases}$$
We claim that $d=n$. Otherwise, $H_n(\nu_A P_{\bullet})=0$. It follows that $\nu \alpha_n$ is a monomorphism. Since $\nu_A P_n$ and $\nu_A P_{n-1}$ are injective modules, thus $\nu_A \alpha_n$ splits. So $\alpha_n=\nu^{-1}_A\nu_A \alpha_n$ splits. It's a contradiction.
Then $\nu_A P_{\bullet}$ is an injective resolution of $Y$. We claim that $\nu_A \alpha_1$ is not a splitting epimorphism (otherwise, $\alpha_1$ is an splitting epimorphism, it's a contradiction). Hence, $\nu_A P_{\bullet}$ is a minimal injective resolution of $Y$, it follows that ${\rm inj.}\dim Y=d=n={\rm proj.}\dim X$.
\end{proof}

\begin{rem}
If $(E_1, E_2, \cdots, E_n)$ is a perfect exceptional $n$-cycle with respect to $(c_1, c_2, \cdots, c_n)$ in $K^b(A\mbox{-}{\rm proj})$, then by above lemma, the sequence of integers $(c_1, c_2, \cdots, c_n)$ has an interpretation of representation theory: each integer $c_i$ equals to the projective dimension of $E_i$ $($or the injective dimension of $E_{i+1})$, hence non-negative.
\end{rem}

\begin{lem}\label{S1}
Assume that $A, B, \Lambda$ are as above, $X, Y\in A\mbox{-}{\rm mod}$ such that $S_A(X)\cong Y[c]$ in $K^b(A\mbox{-}{\rm proj})$, $N$ is an $A$-$B$-bimodule satisfing $\Ext^t_A(N, Y)=0$ for $t\ge 1$. Then
$$S_{\Lambda}\begin{pmatrix}X\\0\end{pmatrix}\cong \begin{pmatrix}Y\\ \Hom_A(N,Y)\end{pmatrix}_{\Theta}[c],$$
where $\Theta : N\otimes_{B}\Hom_A(N, Y)\longrightarrow Y$ is given by $\Theta (n\otimes f)=f(n), \forall\ n\in N, f\in\Hom_{A}(N, Y).$
\end{lem}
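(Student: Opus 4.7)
The plan is to compute $S_\Lambda\begin{pmatrix}X\\0\end{pmatrix}$ explicitly by passing to a projective resolution, applying $\nu_\Lambda$ term-wise, and then identifying the cohomology of the resulting bounded complex of $\Lambda$-injectives. Since $S_A(X)\cong Y[c]$, Lemma \ref{interpret} gives $\proj.\dim_A X=c$, so I would choose a minimal projective resolution
\[
P_\bullet:\ 0\longrightarrow P_c\longrightarrow P_{c-1}\longrightarrow\cdots\longrightarrow P_0\longrightarrow 0
\]
of $X$. By the description of the indecomposable projective $\Lambda$-modules recalled in Section~2, $\begin{pmatrix}P_\bullet\\0\end{pmatrix}$ is then a projective resolution of $\begin{pmatrix}X\\0\end{pmatrix}$, so $S_\Lambda\begin{pmatrix}X\\0\end{pmatrix}$ is represented in $K^b(\Lambda\mbox{-}{\rm inj})=K^b(\Lambda\mbox{-}{\rm proj})$ by $\nu_\Lambda\begin{pmatrix}P_\bullet\\0\end{pmatrix}$.

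Applying Lemma \ref{nakayama} in each cohomological degree, this complex equals
\[
\begin{pmatrix}\nu_A P_\bullet\\ \Hom_A(N,\nu_A P_\bullet)\end{pmatrix}_{\Theta_\bullet},
\]
a bounded complex of $\Lambda$-injectives whose upper row is $\nu_A P_\bullet$, whose lower row is the complex $\Hom_A(N,\nu_A P_\bullet)$ of $B$-modules, and whose structure map at each position is the evaluation $\Theta$. By hypothesis $\nu_A P_\bullet\cong S_A(X)\cong Y[c]$ in $K^b(A\mbox{-}{\rm inj})$, so the upper row has cohomology $Y$ concentrated in degree $-c$; indeed, by the proof of Lemma \ref{interpret}, $\nu_A P_\bullet[-c]$ is a minimal injective resolution of $Y$. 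Consequently $\Hom_A(N,\nu_A P_\bullet)$ computes $R\Hom_A(N,Y)[c]$, and the vanishing hypothesis $\Ext^t_A(N,Y)=0$ for $t\geq 1$ collapses its cohomology to $\Hom_A(N,Y)$ in degree $-c$.

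To upgrade these componentwise cohomology computations to an isomorphism in $K^b(\Lambda\mbox{-}{\rm proj})$, I would construct a quasi-isomorphism of $\Lambda$-complexes
\[
\begin{pmatrix}Y\\ \Hom_A(N,Y)\end{pmatrix}_\Theta[c]\longrightarrow \nu_\Lambda\begin{pmatrix}P_\bullet\\0\end{pmatrix}
\]
from the augmentations $Y\hookrightarrow \nu_A P_c$ and $\Hom_A(N,Y)\to \Hom_A(N,\nu_A P_c)$; the square expressing $\Lambda$-linearity commutes by naturality of the evaluation map $\Theta$. Since $\Lambda$ is Gorenstein, $K^b(\Lambda\mbox{-}{\rm inj})=K^b(\Lambda\mbox{-}{\rm proj})$ inside $D^b(\Lambda)$, so a quasi-isomorphism between bounded complexes of $\Lambda$-injectives is an isomorphism in $K^b(\Lambda\mbox{-}{\rm proj})$, which yields the claimed formula.

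The main obstacle is to justify that $\Hom_A(N,\nu_A P_\bullet)$ genuinely realises $R\Hom_A(N,Y)[c]$. This uses that each $\nu_A P_i$ is $A$-injective, so $\nu_A P_\bullet$ is (up to shift) an injective resolution of $Y$, together with the Ext-vanishing hypothesis; a small amount of bookkeeping is needed for the shift conventions. Once that is in place, verifying that the induced structure map on stalk cohomology agrees with $\Theta:N\otimes_B\Hom_A(N,Y)\to Y$ reduces to a naturality check at the augmentation level.
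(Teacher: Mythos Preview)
Your proposal is correct and follows essentially the same approach as the paper: both take a minimal projective resolution $P_\bullet$ of $X$, use Lemma~\ref{interpret} to identify $\nu_A P_\bullet$ as an injective resolution of $Y$ (shifted by $c$), apply Lemma~\ref{nakayama} termwise to compute $\nu_\Lambda\begin{pmatrix}P_\bullet\\0\end{pmatrix}$, and then use the $\Ext$-vanishing hypothesis to show $\Hom_A(N,\nu_A P_\bullet)$ is exact except at the leftmost spot, yielding the desired injective resolution of $\begin{pmatrix}Y\\ \Hom_A(N,Y)\end{pmatrix}_\Theta$. The only cosmetic difference is that the paper spells out the resulting exact sequences explicitly, whereas you phrase the final identification via a quasi-isomorphism into a bounded complex of injectives.
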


\begin{proof}
By Lemma \ref{interpret}, ${\rm proj.}\dim X=c$, take a (minimal) projective resolution of $X$,
$$0 \longrightarrow P_c\longrightarrow \cdots \longrightarrow P_1\longrightarrow P_0 \stackrel{\alpha}{\longrightarrow} X \longrightarrow0.$$

It follows from the proof of Lemma \ref{interpret} that
$$0 \longrightarrow Y\stackrel{\beta}{\longrightarrow} \nu_A P_c\longrightarrow\cdots \longrightarrow \nu_A P_1\longrightarrow \nu_A P_0\longrightarrow 0$$ is a (minimal) injective resolution of $Y$. Applying $\Hom_A(N,-)$ to above injective resolution of $Y$, one gets
$$0 \longrightarrow \Hom_A(N, \nu_A P_c)\longrightarrow \cdots \longrightarrow \Hom_A(N, \nu_A P_1)\longrightarrow \Hom_A(N, \nu_A P_0)\longrightarrow 0.$$
Actually, the $t$-th cohomology group is $\Ext^t_A(N, Y)$.
Since $\Ext^t_A(N, Y)=0$ for $t \ge 1$, then
$$0 \longrightarrow \Hom_A(N, Y)\stackrel{\beta\circ(-)}{\longrightarrow} \Hom_A(N, \nu_A P_c)\longrightarrow \cdots \longrightarrow \Hom_A(N, \nu_A P_0)\longrightarrow 0$$ is exact. Then we have exact sequences
$$0\longrightarrow \begin{pmatrix}P_c\\ 0 \end{pmatrix}\longrightarrow \cdots  \longrightarrow \begin{pmatrix}P_1\\ 0 \end{pmatrix}\longrightarrow \begin{pmatrix}P_0\\ 0 \end{pmatrix}\stackrel{\left(\begin{smallmatrix}\alpha\\0\end{smallmatrix}\right)}{\longrightarrow} \begin{pmatrix}X\\ 0 \end{pmatrix}\longrightarrow 0$$
and
$$0\longrightarrow \begin{pmatrix}Y\\ \Hom_A(N, Y) \end{pmatrix}_{\Theta}\stackrel{\left(\begin{smallmatrix}\beta\\ \beta\circ(-)\end{smallmatrix}\right)}{\longrightarrow} \begin{pmatrix} \nu_A P_c \\  \Hom_A(N ,\nu_A P_c)\end{pmatrix}_{\Theta}\longrightarrow \cdots  \longrightarrow  \begin{pmatrix}\nu_A P_0\\ \Hom_A(N ,\nu_A P_0) \end{pmatrix}_{\Theta}\longrightarrow0,$$
where $\Theta: N\otimes_{B}\Hom_A(N, Z)\longrightarrow Z$ is given
by $\Theta (n\otimes f)=f(n), \forall\ n\in N, f\in\Hom_{A}(N, Z), Z\in \{Y, \nu_A P_c,\cdots, \nu_A P_0\}.$

That is, $P^\bullet=\Bigg( 0\longrightarrow \begin{pmatrix}P_c\\ 0 \end{pmatrix}\longrightarrow \cdots  \longrightarrow  \begin{pmatrix}P_0\\ 0 \end{pmatrix}\longrightarrow 0\Bigg)$
is a projective resolution of $\begin{pmatrix}X\\ 0 \end{pmatrix}$, and
$I^\bullet=\Bigg(0\longrightarrow \begin{pmatrix} \nu_A P_c \\  \Hom_A(N ,\nu_A P_c)\end{pmatrix}_{\Theta}\longrightarrow \cdots  \longrightarrow  \begin{pmatrix}\nu_A P_0\\ \Hom_A(N ,\nu_A P_0) \end{pmatrix}_{\Theta}\longrightarrow0\Bigg)$
is an injective resolution of $\begin{pmatrix}Y\\ \Hom_A(N, Y)\end{pmatrix}_{\Theta}$.

By Lemma \ref{nakayama}, $\nu_{\Lambda}P^\bullet\cong I^\bullet$, so $S_{\Lambda}\begin{pmatrix}X  \\  0 \end{pmatrix}\cong S_{\Lambda}(P^\bullet)\cong \nu_{\Lambda}P^{\bullet}\cong  I^\bullet\cong \begin{pmatrix}Y \\  \Hom_A(N, Y)\end{pmatrix}_{\Theta}[c]$.
\end{proof}

The following Lemma can be proved by the same argument as Lemma \ref{S1}, we omit the proof.

\begin{lem}\label{S2}
Assume that $A, B, \Lambda$ are as above, $V, W\in B\mbox{-}{\rm mod}$ such that $S_B(V)\cong W[d]$ in $K^b(B\mbox{-}{\rm proj})$, $N$ is an $A$-$B$-bimodule satisfing $\Tor_t^B(N, V)=0$ for $t\ge 1$. Then
$$S_{\Lambda}\begin{pmatrix}N\otimes_B V\\V\end{pmatrix}_{\rm Id}\cong \begin{pmatrix}0\\ W\end{pmatrix}[d].$$
\end{lem}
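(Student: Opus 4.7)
The plan is to mirror the proof of Lemma \ref{S1} with the roles of $\Hom_A(N, -)$ and $N\otimes_B -$ interchanged, and the Ext vanishing hypothesis replaced by the Tor vanishing hypothesis. First, by Lemma \ref{interpret} the isomorphism $S_B(V)\cong W[d]$ forces $\proj.\dim V=d=\inj.\dim W$, so I would choose a minimal projective resolution
$$0\longrightarrow Q_d\longrightarrow \cdots \longrightarrow Q_1\longrightarrow Q_0\stackrel{\alpha}{\longrightarrow} V\longrightarrow 0$$
in $B\mbox{-}{\rm mod}$, and repeat the argument of Lemma \ref{interpret} to see that applying $\nu_B$ produces a minimal injective resolution
$$0\longrightarrow W\stackrel{\beta}{\longrightarrow} \nu_B Q_d\longrightarrow \cdots \longrightarrow \nu_B Q_0\longrightarrow 0$$
of $W$ in $B\mbox{-}{\rm mod}$.

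Next, because $\Tor_t^B(N, V)=0$ for $t\ge 1$, applying $N\otimes_B-$ to the projective resolution of $V$ preserves exactness, giving
$$0\longrightarrow N\otimes_B Q_d\longrightarrow \cdots \longrightarrow N\otimes_B Q_0\stackrel{{\rm Id}\otimes\alpha}{\longrightarrow} N\otimes_B V\longrightarrow 0$$
in $A\mbox{-}{\rm mod}$. Pairing the two exact sequences row-by-row and using that each $\begin{pmatrix}N\otimes_B Q\\ Q\end{pmatrix}_{\rm Id}$ is a projective $\Lambda$-module when $Q$ is a projective $B$-module, I would assemble the projective resolution
$$P^{\bullet}:\ 0\longrightarrow \begin{pmatrix}N\otimes_B Q_d\\ Q_d\end{pmatrix}_{\rm Id}\longrightarrow \cdots \longrightarrow \begin{pmatrix}N\otimes_B Q_0\\ Q_0\end{pmatrix}_{\rm Id}\longrightarrow \begin{pmatrix}N\otimes_B V\\ V\end{pmatrix}_{\rm Id}\longrightarrow 0$$
of the $\Lambda$-module $\begin{pmatrix}N\otimes_B V\\ V\end{pmatrix}_{\rm Id}$.

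Then Lemma \ref{nakayama} applied term-by-term turns $P^{\bullet}$ into the complex of injective $\Lambda$-modules
$$I^{\bullet}:\ 0\longrightarrow \begin{pmatrix}0\\ \nu_B Q_d\end{pmatrix}\longrightarrow \cdots \longrightarrow \begin{pmatrix}0\\ \nu_B Q_0\end{pmatrix}\longrightarrow 0,$$
whose first coordinate is identically zero and whose second coordinate is precisely the injective resolution of $W$ in $B\mbox{-}{\rm mod}$. Consequently $I^{\bullet}$ is an injective resolution of $\begin{pmatrix}0\\ W\end{pmatrix}[d]$ in $\Lambda\mbox{-}{\rm mod}$, and therefore
$$S_{\Lambda}\begin{pmatrix}N\otimes_B V\\ V\end{pmatrix}_{\rm Id}\cong S_{\Lambda}(P^{\bullet})\cong \nu_{\Lambda}P^{\bullet}\cong I^{\bullet}\cong \begin{pmatrix}0\\ W\end{pmatrix}[d]$$
in $K^b(\Lambda\mbox{-}{\rm proj})$.

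The only non-formal step is the use of $\Tor_t^B(N, V)=0$ to certify that $N\otimes_B Q_{\bullet}$ remains a resolution of $N\otimes_B V$, which is exactly what makes the two-row complex $P^{\bullet}$ a bona fide projective resolution of $\begin{pmatrix}N\otimes_B V\\ V\end{pmatrix}_{\rm Id}$; without this vanishing the first-row cohomology of $P^{\bullet}$ would contribute non-trivial $\Tor$'s and the argument would collapse. Everything else is the obvious dualisation of the bookkeeping in Lemma \ref{S1}, so I would simply indicate that the proof proceeds \emph{mutatis mutandis}.
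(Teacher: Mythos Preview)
Your proposal is correct and follows precisely the approach the paper intends: the paper omits the proof of Lemma~\ref{S2} and simply says it ``can be proved by the same argument as Lemma~\ref{S1}'', and your write-up is exactly that dualisation---replacing the projective resolution of $X$ by one of $V$, $\Hom_A(N,-)$ by $N\otimes_B-$, the Ext-vanishing by the Tor-vanishing, and invoking the second isomorphism of Lemma~\ref{nakayama} in place of the first. There is no substantive difference between your argument and the one the paper has in mind.
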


\begin{lem}\label{S3}
Assume that $A, B, \Lambda$ are as above, $X, Y\in A\mbox{-}{\rm mod}$ such that $S_A(X)\cong Y[c]$ in $K^b(A\mbox{-}{\rm proj})$, $V, W\in B\mbox{-}{\rm mod}$ such that $S_B(V)\cong W[d]$ in $K^b(B\mbox{-}{\rm proj})$, $N$ is an $A$-$B$-bimodule satisfing $\Ext^t_A(N, Y)=\begin{cases}W, & {\rm if}\ t=c;\\ 0, &{\rm otherwise}\end{cases}$ and $\Tor_t^B(N, V)=\begin{cases}X, & {\rm if}\ t=d;\\ 0, &{\rm otherwise}\end{cases}$. Then
$$S_{\Lambda}\begin{pmatrix}0 \\V\end{pmatrix}\cong \begin{pmatrix}Y\\ 0\end{pmatrix}[c+d+1].$$
\end{lem}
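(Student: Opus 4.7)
The plan is to follow the strategy of Lemma \ref{S1}: construct an explicit projective resolution of $\begin{pmatrix}0\\ V\end{pmatrix}$, apply the Nakayama functor $\nu_{\Lambda}$ termwise via Lemma \ref{nakayama}, and identify the resulting complex of injectives as an injective resolution of $\begin{pmatrix}Y\\ 0\end{pmatrix}[c+d+1]$.

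First, take minimal projective resolutions $Q_\bullet\to V$ over $B$ (of length $d$ by Lemma \ref{interpret}) and $P_\bullet\to X$ over $A$ (of length $c$). I would splice them into a resolution of length $c+d+1$:
$$R_\bullet:\;\; 0\to \begin{pmatrix}P_c\\ 0\end{pmatrix}\to\cdots\to \begin{pmatrix}P_0\\ 0\end{pmatrix}\xrightarrow{\delta}\begin{pmatrix}N\otimes_B Q_d\\ Q_d\end{pmatrix}_{\rm Id}\to\cdots\to\begin{pmatrix}N\otimes_B Q_0\\ Q_0\end{pmatrix}_{\rm Id}\to\begin{pmatrix}0\\ V\end{pmatrix}\to 0,$$
where $\delta$ is the composite of the surjection $\begin{pmatrix}P_0\\ 0\end{pmatrix}\twoheadrightarrow \begin{pmatrix}X\\ 0\end{pmatrix}$ and the inclusion $\begin{pmatrix}X\\ 0\end{pmatrix}\hookrightarrow \begin{pmatrix}N\otimes_B Q_d\\ Q_d\end{pmatrix}_{\rm Id}$ coming from $X=\Tor^B_d(N,V)=\Ker(N\otimes_B Q_d\to N\otimes_B Q_{d-1})$. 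Exactness of $R_\bullet$ is verified by dimension shifting using the Tor hypothesis: at each step $0\le i<d$ the syzygy works out to $\begin{pmatrix}N\otimes_B\Omega^{i+1}V\\ \Omega^{i+1}V\end{pmatrix}_{\rm Id}$, and at step $d$ it is exactly $\begin{pmatrix}X\\ 0\end{pmatrix}$, which is then resolved by the $\begin{pmatrix}P_j\\ 0\end{pmatrix}$'s.

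Next, applying $\nu_{\Lambda}$ termwise and decomposing $R_\bullet$ via the short exact sequence of complexes $0\to L_\bullet\to R_\bullet\to U_\bullet\to 0$, where $L_\bullet$ is the subcomplex of lower-half terms $\begin{pmatrix}N\otimes_B Q_i\\ Q_i\end{pmatrix}_{\rm Id}$ and $U_\bullet$ is the quotient of upper-half terms $\begin{pmatrix}P_j\\ 0\end{pmatrix}$, I would analyze $H^*(\nu_{\Lambda}R_\bullet)=S_\Lambda\begin{pmatrix}0\\ V\end{pmatrix}$ via the associated long exact sequence. By Lemma \ref{nakayama} and $\nu_B Q_\bullet\simeq W[d]$ (from $S_B V=W[d]$), $\nu_{\Lambda}L_\bullet\simeq \begin{pmatrix}0\\ W\end{pmatrix}[d]$. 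For $U_\bullet$, use the short exact sequence $0\to \begin{pmatrix}\nu_A P_\bullet\\ 0\end{pmatrix}\to \nu_{\Lambda}\begin{pmatrix}P_\bullet\\ 0\end{pmatrix}\to \begin{pmatrix}0\\ \Hom_A(N,\nu_A P_\bullet)\end{pmatrix}\to 0$ together with $\nu_A P_\bullet\simeq Y[c]$ and the Ext hypothesis (which forces $\Hom_A(N,\nu_A P_\bullet)\simeq \operatorname{RHom}_A(N,Y)[c]\simeq W$ in $D^b(B)$, concentrated in degree $0$); this gives $\nu_{\Lambda}U_\bullet$ cohomology $\begin{pmatrix}Y\\ 0\end{pmatrix}$ in degree $-(c+d+1)$ and $\begin{pmatrix}0\\ W\end{pmatrix}$ in degree $-(d+1)$.

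The long exact sequence immediately yields $H^{-(c+d+1)}(\nu_{\Lambda}R_\bullet)=\begin{pmatrix}Y\\ 0\end{pmatrix}$ and vanishing in every degree except possibly $-(d+1)$ and $-d$. The main obstacle is to verify that the connecting morphism $\partial: H^{-(d+1)}(\nu_{\Lambda}U_\bullet)=\begin{pmatrix}0\\ W\end{pmatrix}\to H^{-d}(\nu_{\Lambda}L_\bullet)=\begin{pmatrix}0\\ W\end{pmatrix}$ is an isomorphism, so that the two "spurious" copies of $W$ cancel. This amounts to tracing $\nu_\Lambda\delta$ through the Nakayama adjunction isomorphism $\Hom_\Lambda(\begin{pmatrix}P_0\\ 0\end{pmatrix},\begin{pmatrix}N\otimes_B Q_d\\ Q_d\end{pmatrix}_{\rm Id})\cong \Hom_\Lambda(\nu_{\Lambda}\begin{pmatrix}P_0\\ 0\end{pmatrix},\begin{pmatrix}0\\ \nu_B Q_d\end{pmatrix})$ and confirming that the distinguished element $\iota\circ\pi$ on the left corresponds on the right to a $\Lambda$-map whose bottom component $\Hom_A(N,\nu_A P_0)\to \nu_B Q_d$ induces an isomorphism on the relevant $W$-subquotients. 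Once this check is completed, the LES forces $H^i(\nu_{\Lambda}R_\bullet)=0$ for $i\ne -(c+d+1)$, giving $S_\Lambda\begin{pmatrix}0\\ V\end{pmatrix}\cong \begin{pmatrix}Y\\ 0\end{pmatrix}[c+d+1]$.
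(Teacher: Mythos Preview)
Your proposal is correct and shares the paper's core strategy: both construct exactly the same spliced projective resolution $R_\bullet$ of $\begin{pmatrix}0\\V\end{pmatrix}$ (the paper writes the middle map as $\left(\begin{smallmatrix}\beta_1\alpha_1\\0\end{smallmatrix}\right)$, your $\delta$) and apply $\nu_\Lambda$ termwise via Lemma~\ref{nakayama}. The difference is only in how the cohomology of $\nu_\Lambda R_\bullet$ is extracted. You pass through the stupid-truncation short exact sequence $0\to L_\bullet\to R_\bullet\to U_\bullet\to 0$ and its long exact sequence, leaving the connecting map on $\begin{pmatrix}0\\W\end{pmatrix}$ as a flagged ``obstacle''. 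The paper instead writes $\nu_\Lambda R_\bullet$ out directly as the single complex with terms $\begin{pmatrix}\nu_A P_i\\\Hom_A(N,\nu_A P_i)\end{pmatrix}_\Theta$ followed by $\begin{pmatrix}0\\\nu_B Q_j\end{pmatrix}$ and middle differential $\left(\begin{smallmatrix}0\\\beta_2\alpha_2\end{smallmatrix}\right)$ (where $\alpha_2\colon\Hom_A(N,\nu_A P_0)\twoheadrightarrow W$ comes from the Ext hypothesis and $\beta_2\colon W\hookrightarrow\nu_B Q_d$ from the injective resolution of $W$), and then checks exactness componentwise: the top row is the injective resolution of $Y$ followed by zeros, and the bottom row is the exact splice through $W$ of $\Hom_A(N,\nu_A P_\bullet)$ with $\nu_B Q_\bullet$. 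Your connecting-map obstacle is precisely this identification of $\nu_\Lambda(\delta)$, so the two arguments coincide at the one nontrivial point; the paper's componentwise packaging simply gets there without the LES scaffolding.
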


\begin{proof}
Take a projective resolution of $X$,
$$0 \longrightarrow P_c\longrightarrow \cdots \longrightarrow P_1\longrightarrow P_0 \stackrel{\alpha_1}{\longrightarrow} X\longrightarrow 0.$$

Since $S_A(X)\cong Y[c]$, then
$$0 \longrightarrow Y\stackrel{\gamma_1}{\longrightarrow} \nu_A P_c\longrightarrow\cdots \longrightarrow \nu_A P_1\longrightarrow \nu_A P_0\longrightarrow 0$$ is an injective resolution of $Y$. Applying $\Hom_A(N,-)$ to above injective resolution of $Y$, one gets
$$0 \longrightarrow \Hom_A(N, \nu_A P_c)\longrightarrow \cdots \longrightarrow \Hom_A(N, \nu_A P_1)\longrightarrow \Hom_A(N, \nu_A P_0)\longrightarrow 0.$$
The $t$-th cohomology group is $\Ext^t_A(N, Y)$. Since $\Ext^t_A(N, Y)=\begin{cases}W, & {\rm if}\ t=c;\\ 0, &{\rm otherwise}\end{cases}$, then we get an exact sequence
$$0 \longrightarrow \Hom_A(N, \nu_A P_c)\longrightarrow \cdots \longrightarrow \Hom_A(N,\nu_AP_0)\stackrel{\alpha_2}{\longrightarrow} W \longrightarrow 0.$$

\vskip10pt

Take a projective resolution of $V$,
$$0 \longrightarrow Q_d\longrightarrow \cdots \longrightarrow Q_1\longrightarrow Q_0 \stackrel{\gamma_2}{\longrightarrow} V\longrightarrow0$$
Since $S_B(V)\cong W[d]$, then
$$0 \longrightarrow W \stackrel{\beta_2}{\longrightarrow} \nu_B Q_d\longrightarrow\cdots \longrightarrow \nu_B Q_1 \longrightarrow \nu_B Q_0\longrightarrow 0$$
is an injective resolution of $W$. Applying $N\otimes_B- $ to above projective resolution of $V$, we get
$$0 \longrightarrow N\otimes_B Q_d\longrightarrow \cdots \longrightarrow N\otimes_B Q_1\longrightarrow N\otimes_B Q_0\longrightarrow 0.$$
The $t$-th homology group is $\Tor_t^B(N, V)$. Since $\Tor_t^B(N, V)=\begin{cases}X, & {\rm if}\ t=d;\\ 0, &{\rm otherwise}\end{cases}$, there is an exact sequence
$$0 \longrightarrow X\stackrel{\beta_1}{\longrightarrow} N\otimes_B P_d\longrightarrow \cdots \longrightarrow N\otimes_B P_0\longrightarrow 0.$$

Then we have exact sequences
$$0\rightarrow \begin{pmatrix}P_c\\0 \end{pmatrix} \rightarrow \cdots \rightarrow \begin{pmatrix}P_{0}\\0 \end{pmatrix}   \stackrel{\left(\begin{smallmatrix}\beta_1\alpha_1\\ 0\end{smallmatrix}\right)}{\longrightarrow} \begin{pmatrix} N\otimes_B  Q_d\\ Q_d \end{pmatrix}_{\rm Id}\rightarrow \cdots  \rightarrow  \begin{pmatrix}N\otimes_B Q_0 \\Q_0 \end{pmatrix}_{\rm Id}\stackrel{\left(\begin{smallmatrix} 0\\  \gamma_2 \end{smallmatrix}\right)}{\longrightarrow}\begin{pmatrix} 0\\ V \end{pmatrix}\rightarrow 0$$
and
$$0\longrightarrow \begin{pmatrix}Y\\ 0 \end{pmatrix}\stackrel{\left(\begin{smallmatrix}\gamma_1\\ 0\end{smallmatrix}\right)}{\longrightarrow} \begin{pmatrix}\nu_A P_c  \\ \Hom_A(N, \nu_A P_c)\end{pmatrix}_{\Theta}\longrightarrow \cdots\longrightarrow \begin{pmatrix}\nu_A P_0  \\ \Hom_A(N, \nu_A P_0)\end{pmatrix}_{\Theta}\stackrel{\left(\begin{smallmatrix}0\\ \beta_2\alpha_2\end{smallmatrix}\right)}{\longrightarrow}\begin{pmatrix} 0 \\  \nu_B Q_d\end{pmatrix} \hskip80pt$$
$$\hskip270pt\longrightarrow \cdots  \longrightarrow \begin{pmatrix}  0\\ \nu_B Q_0 \end{pmatrix}\longrightarrow0$$
It follows that $S_{\Lambda}\begin{pmatrix}0\\V\end{pmatrix}\cong \begin{pmatrix}Y\\ 0\end{pmatrix}[c+d+1].$

\end{proof}

\subsection{Product of two perfect exceptional cycles}

\begin{defn}\label{prod}
Suppose that $A$ and $B$ are finite dimensional algebras, $E_*=(E_1, E_2, \cdots, E_n)$ is a sequence of $A$-modules with $n\ge 1$,
$F_*=(F_1, F_2, \cdots, F_m)$ a sequence of $B$-modules with $m\ge1$. Let $\Lambda=\begin{pmatrix}A & N\\ 0 & B \end{pmatrix}$ with $A$-$B$-bimodule $N=E_n\otimes_k D(F_1)$, $\Psi: E_n\otimes_k D(F_1)\otimes_B F_1\longrightarrow E_n$ be an $A$-module homomorphism given by $\Psi(a\otimes_k f\otimes_B b)=f(b)a, \forall\ a\in E_n, f\in D(F_1), b\in F_1$.
Then the sequence of $\Lambda$-modules
$$(\begin{pmatrix}E_1\\0\end{pmatrix}, \cdots, \begin{pmatrix}E_{n-1}\\0\end{pmatrix}, \begin{pmatrix}E_n\\F_1\end{pmatrix}_{\Psi}, \begin{pmatrix}0\\ F_2\end{pmatrix}, \cdots, \begin{pmatrix}0\\ F_m\end{pmatrix})$$
is called the product of $E_*$ and $F_*$, denote it by $E_*\mathop{\boxtimes} F_*$.
\end{defn}

\begin{lem} \label{key}
Let $A$ and $B$ be finite dimensional algebras, $E, X\in A\mbox{-}{\rm mod}$ and $F, Y\in B\mbox{-}{\rm mod}$.\\
$(1)$ There exists an isomorphism of $B$-module
$$\Ext^n_A(E\otimes_k D(F), X)\cong F\otimes_k\Ext^n_A(E, X),\ \forall\ n\ge0.$$
$(2)$ There exists an isomorphism of $A$-module
$$\Tor_n^B(E\otimes_kD(F), Y)\cong E\otimes_kD\Ext^n_B(Y, F),\ \forall\ n\ge0.$$
\end{lem}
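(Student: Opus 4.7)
Both parts are proved by computing with projective resolutions over $A$ and $B$ respectively, reducing to natural degree-zero isomorphisms and then invoking the exactness of $-\otimes_k -$ and of the $k$-dual $D = \Hom_k(-, k)$. No hypotheses beyond finite $k$-dimensionality of $F$ will be needed.

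For \emph{part $(1)$}, the first step is the degree-zero natural isomorphism
\[
\Hom_A(E \otimes_k D(F), X) \;\cong\; F \otimes_k \Hom_A(E, X)
\]
of left $B$-modules, obtained by combining the standard adjunction $\Hom_A(E \otimes_k V, X) \cong \Hom_k(V, \Hom_A(E, X))$ with the canonical iso $\Hom_k(D(F), W) \cong F \otimes_k W$, available because $\dim_k F < \infty$; the left $B$-structure on each side is tracked from the right $B$-action on $D(F)$ and the left $B$-action on $F$ respectively, and compatibility is routine. Next I take a projective resolution $P_\bullet \to E$ over $A$. Since $D(F)$ is a finite-dimensional $k$-space, $P_i \otimes_k D(F) \cong P_i^{\dim_k F}$ as a left $A$-module and is therefore projective; exactness of $-\otimes_k D(F)$ then guarantees that $P_\bullet \otimes_k D(F) \to E \otimes_k D(F)$ is a projective resolution. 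Applying $\Hom_A(-, X)$ and the degree-zero iso above gives a cochain-level isomorphism, and passing to $n$-th cohomology (using that $F \otimes_k -$ is exact) yields $(1)$.

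For \emph{part $(2)$}, the analogous degree-zero ingredient is the natural iso
\[
D(F) \otimes_B Q \;\cong\; D\Hom_B(Q, F)
\]
for every left $B$-module $Q$. Both sides are right-exact covariant functors of $Q$ -- the right-hand side because $\Hom_B(-, F)$ is left-exact contravariant and $D$ is exact contravariant -- and both equal $D(F)$ when $Q = B$, so the natural transformation $f \otimes q \mapsto (\varphi \mapsto f(\varphi(q)))$ is an isomorphism on free modules and hence on all $Q$ via a projective presentation. Given a projective resolution $Q_\bullet \to Y$ over $B$, exactness of $E \otimes_k -$ gives
\[
(E \otimes_k D(F)) \otimes_B Q_\bullet \;\cong\; E \otimes_k (D(F) \otimes_B Q_\bullet) \;\cong\; E \otimes_k D\Hom_B(Q_\bullet, F),
\]
and taking $H_n$, using once more that $E \otimes_k -$ and $D$ are exact, produces $E \otimes_k D\Ext^n_B(Y, F)$. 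The induced left $A$-structure comes entirely from $E$, so matches the claim.

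The main obstacle I anticipate is bookkeeping of bimodule structures through the chain of natural isomorphisms, especially in $(2)$ where the duality converts $\Tor$ into $\Ext$: one has to construct $D(F) \otimes_B Q \cong D\Hom_B(Q,F)$ as a genuine natural transformation of functors in $Q$ so that it extends unambiguously from $Q=B$ to the whole resolution $Q_\bullet$ and commutes with $H_n$ on the left and $D \circ H^n$ on the right.
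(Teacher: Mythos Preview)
Your argument is correct. For part~(2) it is essentially identical to the paper's proof: both take a projective resolution $Q_\bullet\to Y$, use the natural isomorphism $D(F)\otimes_B Q\cong D\Hom_B(Q,F)$, and then pull the exact functors $E\otimes_k-$ and $D$ past homology.

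For part~(1) there is a small but harmless difference worth noting. The paper resolves the \emph{second} variable, taking an injective resolution $0\to X\to I^\bullet$ and computing $\Ext^n$ as $H^n(\Hom_A(E\otimes_kD(F),I^\bullet))$; the adjunction $\Hom_A(E\otimes_kD(F),-)\cong\Hom_k(D(F),\Hom_A(E,-))$ and exactness of $\Hom_k(D(F),-)$ then finish. You instead resolve the \emph{first} variable, noting that $P_\bullet\otimes_kD(F)$ is a projective resolution of $E\otimes_kD(F)$ and computing $\Ext^n$ from that side. Both routes rest on the same two facts (the tensor--Hom adjunction and exactness of $F\otimes_k-$), so neither buys anything the other does not; your version has the minor extra step of checking that $P_i\otimes_kD(F)$ is $A$-projective, which you handle correctly via $P_i\otimes_kD(F)\cong P_i^{\dim_kF}$.
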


\begin{proof}
$(1)$  Take an injective resolution $0\longrightarrow X\longrightarrow I^\bullet$. Since an exact functor commutes with the cohomology functor, there is an isomorphism of $k$-linear space for $n\ge0$
\begin{align*}
\Ext^n_A(E\otimes_k D(F), X)&\cong {\rm H}^n(\Hom_A(E\otimes_k D(F), I^\bullet)) \\
&\cong {\rm H}^n(\Hom_k(D(F), \Hom_A(E, I^\bullet))\\
&\cong \Hom_k(D(F),{\rm H}^n(\Hom_A(E, I^\bullet)))\\
&\cong \Hom_k(D(F), \Ext^n_A(E, X))\\
&\cong F\otimes_k \Ext^n_A(E, X).
\end{align*}
On the other hand, $\Ext^n_A(E\otimes_k D(F), X)$ has the left $B$-module structure given by the right $B$-module structure of $E\otimes_kD(F)$, hence for $n\ge0$, there exists a $B$-module isomorphism
$$\Ext^n_A(E\otimes_k D(F), X)\cong F\otimes_k\Ext^n_A(E, X).$$
$(2)$ Take a projective resolution $P^\bullet\longrightarrow Y\longrightarrow 0$. Since an exact functor commutes with the homology functor, there is an isomorphism of $k$-linear space for $n\ge0$
\begin{align*}
\Tor^B_n(E\otimes_k D(F), Y)& \cong {\rm H}_n(E\otimes_k D(F)\otimes_B P^\bullet)\\
& \cong {\rm H}_n(E\otimes_k D\Hom_B(P^\bullet, F))\\
& \cong E\otimes_k{\rm H}_n(D\Hom_B(P^\bullet, F))\\
& \cong E\otimes_kD{\rm H}^n(\Hom_B(P^\bullet, F))\\
&\cong E\otimes_kD\Ext_B^n(Y,F).
\end{align*}
Similarly, $\Tor^B_n(E\otimes_k D(F), Y)$ has the left $A$-module structure given by the left $A$-module structure of $E\otimes_kD(F)$, hence for $n\ge0$, there exists an $A$-module isomorphism
$$\Tor^B_n(E\otimes_k D(F), Y)\cong F\otimes_kD\Ext^n_A(Y, F).$$
\end{proof}

\begin{prop}\label{S4}
Let $A$ and $B$ be Gorenstein algebras, $E_*=(E_1, E_2, \cdots, E_n)$ a perfect exceptional $n$-cycle with respect to $(c_1, c_2, \cdots, c_n)$ in $K^b(A\mbox{-}{\rm proj})$, $F_*=(F_1, F_2, \cdots, F_m)$ a perfect exceptional $m$-cycle with respect to $(d_1, d_2, \cdots, d_m)$ in $K^b(B\mbox{-}{\rm proj})$. Let $\Lambda=\begin{pmatrix}A & N\\ 0 & B \end{pmatrix}$ with $A$-$B$-bimodule $N=E_n\otimes_k D(F_1)$. Then $E_*\mathop{\boxtimes} F_*$ is $S_{\Lambda}$-period with respect to $(c_1,\cdots,c_{n-1}, d_1, \cdots, d_{m-1}, c_n+d_m+1)$ in $K^b(\Lambda\mbox{-}{\rm proj})$.
\end{prop}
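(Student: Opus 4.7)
The goal is to verify the $n+m-1$ isomorphisms $S_\Lambda(G_i)\cong G_{i+1}[e_i]$ for $G_\ast := E_\ast\boxtimes F_\ast$ (with indices read mod $n+m-1$) and the period sequence from the statement. I would split the positions into five flavors and apply one of Lemmas~\ref{S1}, \ref{S2}, \ref{S3} in each, after a single structural identification of the middle object $G_n$; all the Ext/Tor hypotheses appearing in those lemmas reduce via Lemma~\ref{key} to the known Ext groups of $E_\ast$ and $F_\ast$ furnished by Lemma~\ref{iff2}.

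The preparatory identification is that $G_n = \begin{pmatrix} E_n\\ F_1 \end{pmatrix}_{\Psi} \cong \begin{pmatrix} N\otimes_B F_1\\ F_1 \end{pmatrix}_{\mathrm{Id}}$ as $\Lambda$-modules. Indeed the trace pairing $D(F_1)\otimes_B F_1\to k$, $f\otimes b\mapsto f(b)$, coincides with the canonical isomorphism $D(F_1)\otimes_B F_1 \cong D\Hom_B(F_1,F_1) = D(k) = k$ (using $\Hom_B(F_1,F_1)=k$, the $\Hom^\bullet = k$ condition on $F_1$ from Definition~\ref{excep}/Lemma~\ref{iff2}); tensoring with $E_n$ turns $\Psi$ into an $A$-linear isomorphism $N\otimes_B F_1 \xrightarrow{\sim} E_n$.

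With this in hand, the five flavors of positions are dispatched as follows. \emph{(a) $1\le i\le n-2$:} apply Lemma~\ref{S1} with $X=E_i$, $Y=E_{i+1}$; by Lemma~\ref{key}(1), $\Hom_A(N,E_{i+1})\cong F_1\otimes_k \Hom_A(E_n,E_{i+1})=0$ and $\Ext^{\ge 1}_A(N,E_{i+1})=0$ since $\Ext^\ast_A(E_n,E_{i+1})=0$ by Lemma~\ref{iff2}, so $S_\Lambda G_i\cong G_{i+1}[c_i]$. \emph{(b) $i=n-1$:} Lemma~\ref{S1} yields $\begin{pmatrix} E_n\\ \Hom_A(N,E_n)\end{pmatrix}_{\Theta}[c_{n-1}]$; Lemma~\ref{key}(1) identifies $\Hom_A(N,E_n)\cong F_1$ via $b\mapsto (a\otimes f\mapsto f(b)a)$, and a direct calculation shows $\Theta(n\otimes f)=f(n)$ becomes precisely $\Psi$, so the target equals $G_n[c_{n-1}]$. \emph{(c) $i=n$:} the preparatory identification lets me apply Lemma~\ref{S2} with $V=F_1$, $W=F_2$, $d=d_1$; the vanishing $\Tor^B_{\ge 1}(N,F_1)=0$ follows from Lemma~\ref{key}(2) and $\Ext^{\ge 1}_B(F_1,F_1)=0$. \emph{(d) $n+1\le i\le n+m-2$:} writing $G_i=\begin{pmatrix} 0\\ F_j\end{pmatrix}$ with $2\le j\le m-1$, Lemma~\ref{key}(2) plus $\Hom_B(F_j,F_1)=0$ gives $N\otimes_B F_j=0$, so $G_i=\begin{pmatrix} N\otimes_B F_j\\ F_j\end{pmatrix}_{\mathrm{Id}}$ and Lemma~\ref{S2} again applies. \emph{(e) $i=n+m-1$:} apply Lemma~\ref{S3} with $X=E_n$, $Y=E_1$, $V=F_m$, $W=F_1$, $c=c_n$, $d=d_m$; Lemma~\ref{key} delivers exactly the hypotheses $\Ext^{c_n}_A(N,E_1)\cong F_1$ (zero elsewhere) and $\Tor^B_{d_m}(N,F_m)\cong E_n$ (zero elsewhere), yielding $S_\Lambda G_{n+m-1}\cong \begin{pmatrix} E_1\\ 0\end{pmatrix}[c_n+d_m+1] = G_1[c_n+d_m+1]$.

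The main obstacle is the preparatory identification and its use in (b) and (c): one must justify that the trace pairing is an isomorphism under the exceptional-cycle hypothesis, and then track the structure maps $\Psi$, $\Theta$, and $\mathrm{Id}$ across this identification so that the output of Lemma~\ref{S1} at position $n-1$ and the input of Lemma~\ref{S2} at position $n$ both align with $G_n$. Boundary cases ($n=1$, $m=1$, or the decomposable $1$-cycle with $d=0$ for which $\Hom_B(F_1,F_1)\cong k\times k$ and the trace above is no longer an iso) need only routine adjustments of the same resolution arguments, essentially by recombining two of the five cases into one direct computation.
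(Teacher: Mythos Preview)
Your approach is essentially the same as the paper's: both split the cycle into the same five blocks of positions and invoke Lemmas~\ref{S1}, \ref{S2}, \ref{S3} after reducing the Ext/Tor hypotheses via Lemma~\ref{key} and Lemma~\ref{iff2}. Your ``preparatory identification'' $\begin{pmatrix}E_n\\ F_1\end{pmatrix}_\Psi\cong\begin{pmatrix}N\otimes_B F_1\\ F_1\end{pmatrix}_{\mathrm{Id}}$ and the corresponding matching of $\Theta$ with $\Psi$ at position $n-1$ make explicit a step the paper leaves implicit when it applies Lemma~\ref{S2} directly to $\begin{pmatrix}E_n\\ F_1\end{pmatrix}_\Psi$; otherwise the arguments coincide.
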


\begin{proof}
We only give a proof of the case $n\ge 2, m\ge 2$, since the other three cases: $\text{(i)}\ n=1, m\ge 2;$ $\text{(ii)}\ n\ge 2, m=1; \ \text{(iii)}\ n=1, m=1$ are proved identically.

According to Lemma \ref{key} and Lemma \ref{iff2},
$$\Ext_A^t(N, E_i)\cong F_1\otimes_k\Ext_A^t(E_n, E_i)=\begin{cases}
F_1, & {\rm if}\ i=1,\ t=c_n;\\
F_1, & {\rm if}\ i=n,\ t=0;\ \ \ \ \ \ \ \ (*1)\\
0, & {\rm otherwise.}
\end{cases}$$

$$\Tor^B_t(N, F_i)\cong E_n\otimes D\Ext_B^t(F_i, F_1)=\begin{cases}
E_n,  & {\rm if}\ i=1,\ t=0 ;\\
E_n,  & {\rm if}\ i=m,\ t=d_m ;\ \ \ \ (*2)\\
0, & {\rm otherwise.}
\end{cases}$$

Since $S_A(E_i)\cong E_{i+1}$ for $1\le i \le n-1$, it follows from Lemma \ref{S1} and $(*1)$ that
$$S_{\Lambda}\begin{pmatrix}E_i  \\  0 \end{pmatrix}\cong \begin{pmatrix}E_{i+1} \\  0\end{pmatrix}[c_i],
\ 1\le i\le n-2\ \text{and}\ S_{\Lambda}\begin{pmatrix}E_{n-1}  \\ 0 \end{pmatrix}\cong \begin{pmatrix}E_n \\  F_1\end{pmatrix}_{\Psi}[c_{n-1}].$$

Similarly, due to $S_B(F_i)\cong F_{i+1}$ for $1\le i \le m-1$, according to Lemma \ref{S2} and $(*2)$,

$$S_{\Lambda}\begin{pmatrix}E_n  \\  F_1 \end{pmatrix}_{\Psi}\cong \begin{pmatrix}0 \\  F_2\end{pmatrix}[d_1]\ \text{and}\ S_{\Lambda}\begin{pmatrix}0  \\  F_i \end{pmatrix}\cong \begin{pmatrix}0 \\  F_{i+1}\end{pmatrix}[d_i],\ 2\le i\le m-1.$$

Because $S_A(E_n)\cong E_1[c_n]$ and $S_B(F_m)\cong F_1[d_m]$, by Lemma \ref{S3}, $(*1)$ and $(*2)$,

$$S_{\Lambda}\begin{pmatrix}0  \\  F_m \end{pmatrix}\cong \begin{pmatrix}E_1 \\  0\end{pmatrix}[c_n+d_m+1].$$

\end{proof}

\begin{lem}\label{1x1}
Let $A$ and $B$ be Gorenstein algebras, $E$ a perfect exceptional $1$-cycle with $S_A(E)\cong E[c]$ in $K^b(A\mbox{-}{\rm proj})$,
$F$ a perfect exceptional $1$-cycle with $S_B(F)\cong F[d]$ in $K^b(B\mbox{-}{\rm proj})$. Assume that $\Lambda=\begin{pmatrix}A & N\\ 0 & B \end{pmatrix}$ with $A$-$B$-bimodule $N=E\otimes_k D(F)$. Then $E\mathop{\boxtimes} F=\begin{pmatrix}E\\F\end{pmatrix}_{\Psi}$ is an exceptional $1$-cycle with
$S_\Lambda\begin{pmatrix}E\\ F \end{pmatrix}_{\Psi}\cong \begin{pmatrix}E\\ F \end{pmatrix}_{\Psi}[c+d+1]$.
\end{lem}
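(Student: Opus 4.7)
The plan is to establish two claims: (i) $S_\Lambda X \cong X[c+d+1]$ for $X := E\boxtimes F = \begin{pmatrix}E\\F\end{pmatrix}_\Psi$, and (ii) $\Hom^\bullet_\Lambda(X,X) \cong k \oplus k[-(c+d+1)]$. Claim (i) is the $n = m = 1$ case of Proposition \ref{S4}, which the author asserts is proved identically to the main case; thus $X$ is $(c+d+1)$-Calabi--Yau. For claim (ii), the key preliminary step is the direct computation $\End_\Lambda(X) = k$: a $\Lambda$-endomorphism is a pair $(f, g) \in \End_A(E) \times \End_B(F)$ compatible with $\Psi$, and using the explicit form $\Psi(a \otimes \phi \otimes b) = \phi(b)a$, the compatibility $\phi(b) f(a) = \phi(g(b)) a$ (for all $a \in E$, $b \in F$, $\phi \in D(F)$) forces $f = \lambda\,\mathrm{Id}_E$ and $g = \lambda\,\mathrm{Id}_F$ for a single scalar $\lambda \in k$. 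Then Serre duality via (i) gives $\Hom^{c+d+1}_\Lambda(X,X) \cong D\End_\Lambda(X) \cong k$.

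To verify $\Hom^t_\Lambda(X,X) = 0$ for $0 < t < c+d+1$, apply $\Hom(-,X)$ and $\Hom(X,-)$ to the short exact sequence
\[
0 \longrightarrow \begin{pmatrix}E\\0\end{pmatrix} \longrightarrow X \longrightarrow \begin{pmatrix}0\\F\end{pmatrix} \longrightarrow 0
\]
to obtain two long exact sequences. Lemma \ref{XZ} identifies the ``diagonal'' groups as $\Hom^t(\begin{pmatrix}E\\0\end{pmatrix}, X) \cong \Ext^t_A(E,E)$ and $\Hom^t(X, \begin{pmatrix}0\\F\end{pmatrix}) \cong \Ext^t_B(F,F)$, while Serre duality via (i) identifies the ``cross'' groups as $\Hom^t(X, \begin{pmatrix}E\\0\end{pmatrix}) \cong D\Ext^{c+d+1-t}_A(E,E)$ and $\Hom^t(\begin{pmatrix}0\\F\end{pmatrix}, X) \cong D\Ext^{c+d+1-t}_B(F,F)$. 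The support constraints from the two long exact sequences, combined with the Serre-duality symmetry $\Hom^t(X,X) \cong D\Hom^{c+d+1-t}(X,X)$, force $\Hom^t(X,X) = 0$ for $t \notin \{0, c+d+1\}$ in all cases except the ``resonant'' case $c = d$.

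The main obstacle is this resonant case: both long exact sequences and Serre duality only force $\dim\Hom^c(X,X) = \dim\Hom^{c+1}(X,X) \in \{0,1\}$. One must show the connecting map $\psi_c \colon \Ext^c_A(E,E) \to \Hom^{c+1}(\begin{pmatrix}0\\F\end{pmatrix}, X)$ --- given by Yoneda multiplication with the extension class $\delta \in \Ext^1(\begin{pmatrix}0\\F\end{pmatrix}, \begin{pmatrix}E\\0\end{pmatrix})$ of $X$ --- is an isomorphism $k \to k$. Nontriviality of $\delta$ is clear since $\End(X) = k$ rules out a split extension; the remaining verification that $\psi_c$ has full rank requires a direct analysis using the explicit form of $\Psi$ and the Serre-duality pairing, with the edge cases $c = 0$ or $d = 0$ (where $\End_A(E)$ or $\End_B(F)$ may be two-dimensional) possibly requiring separate handling.
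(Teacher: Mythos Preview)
Your approach coincides with the paper's. Both obtain the Calabi--Yau property from Proposition~\ref{S4}, use the short exact sequence
\[
0 \longrightarrow \begin{pmatrix}E\\0\end{pmatrix} \longrightarrow X \longrightarrow \begin{pmatrix}0\\F\end{pmatrix} \longrightarrow 0,
\]
and analyse the induced long exact sequence in $\Ext^\bullet_\Lambda$ after identifying the outer terms via Lemma~\ref{XZ} and Serre duality. The differences are cosmetic: the paper uses only the covariant sequence obtained from $\Hom_\Lambda(X,-)$ rather than both, and it reads off $\End_\Lambda(X)\cong k$ from the $t=0$ segment of that sequence (using $\Ext_A^{c+d+1}(E,E)=\Ext_A^{c+d}(E,E)=0$ when $c,d\ne 0$) rather than from your direct computation of compatible pairs $(f,g)$.

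Your isolation of the ``resonant'' case $c=d$ is precisely where the comparison is interesting. The paper does not single this case out: in its case analysis (under the running hypothesis $c\ne 0$, $d\ne 0$) it records at $t=d$ and $t=d+1$ that $\Ext_B^d(F,F)\cong k\cong\Ext_A^{c}(E,E)$ and then writes ``it follows that $\Ext_\Lambda^t(X,X)=0$''. But the relevant piece of the long exact sequence is
\[
0\longrightarrow \Ext_\Lambda^d(X,X)\longrightarrow k\longrightarrow k\longrightarrow \Ext_\Lambda^{d+1}(X,X)\longrightarrow 0,
\]
so the paper's conclusion is exactly the assertion that the connecting map $k\to k$ is an isomorphism --- the very point you flag. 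When $c\ne d$ this is recoverable from Serre duality alone (one of $c,c{+}1$ lies outside $\{0,d,d{+}1,c{+}d{+}1\}$, forcing $\Ext_\Lambda^d=\Ext_\Lambda^{d+1}=0$); when $c=d$ the paper supplies no further argument. So your outline is not less complete than the paper here --- it is simply more explicit about where a verification is still owed.
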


\begin{proof}
By Proposition \ref{S4}, one has $S_\Lambda\begin{pmatrix}E\\ F \end{pmatrix}_{\Psi}\cong \begin{pmatrix}E\\ F \end{pmatrix}_{\Psi}[c+d+1]$.
Due to Lemma \ref{interpret}, $c\ge 0, d\ge 0$, hence $c+d+1>0$.  According to Lemma \ref{iff1}, it suffices to prove

$$\Ext_{\Lambda}^t(\begin{pmatrix}E\\ F\end{pmatrix}_{\Psi}, \begin{pmatrix}E\\ F\end{pmatrix}_{\Psi})=\begin{cases}k  &  \text{if}\ t=0; \\ k & \text{if}\ t=c+d+1;\\
0 &\text{otherwise.} \end{cases}$$

By the exact sequence of $\Lambda$-modules
$$0\longrightarrow \begin{pmatrix}E\\ 0\end{pmatrix}\longrightarrow \begin{pmatrix}E\\ F\end{pmatrix}_{\Psi}\longrightarrow\begin{pmatrix}0\\ F\end{pmatrix}\longrightarrow 0,$$
there is a long exact sequence of $k$-linear spaces

$$\cdots\rightarrow\Ext_{\Lambda}^t(\begin{pmatrix}E\\ F\end{pmatrix}_{\Psi}, \begin{pmatrix}E\\ 0\end{pmatrix})
\rightarrow\Ext_{\Lambda}^t(\begin{pmatrix}E\\ F\end{pmatrix}_{\Psi}, \begin{pmatrix}E\\ F\end{pmatrix}_{\Psi})
\rightarrow\Ext_{\Lambda}^t(\begin{pmatrix}E\\ F\end{pmatrix}_{\Psi}, \begin{pmatrix}0\\ F\end{pmatrix})\rightarrow\cdots$$

Consider \begin{align*}\Ext_{\Lambda}^t(\begin{pmatrix}E\\ F\end{pmatrix}_{\Psi}, \begin{pmatrix}E\\ 0\end{pmatrix})&
\cong \Hom_{K^b(\Lambda\mbox{-}{\rm proj})}(\begin{pmatrix}E\\ F\end{pmatrix}_{\Psi}, \begin{pmatrix}E\\ 0\end{pmatrix}[t])\\
&\cong \Hom_{K^b(\Lambda\mbox{-}{\rm proj})}(\begin{pmatrix}E\\ 0\end{pmatrix}[t],S_{\Lambda}\begin{pmatrix}E\\ F\end{pmatrix}_{\Psi})\\
&\cong \Hom_{K^b(\Lambda\mbox{-}{\rm proj})}(\begin{pmatrix}E\\ 0\end{pmatrix}[t],\begin{pmatrix}E\\ F\end{pmatrix}_{\Psi}[c+d+1])\\
&\cong \Hom_{K^b(\Lambda\mbox{-}{\rm proj})}(\begin{pmatrix}E\\ 0\end{pmatrix},\begin{pmatrix}E\\ F\end{pmatrix}_{\Psi}[c+d+1-t])\\
&\cong\Ext_{\Lambda}^{c+d+1-t}(\begin{pmatrix}E\\ 0\end{pmatrix}_{\Psi}, \begin{pmatrix}E\\ F\end{pmatrix}_{\Psi})\ \ \ \ \ (\text{By\ Lemma}\ \ref{XZ})\\
&\cong\Ext_A^{c+d+1-t}(E, E).
\end{align*}
It follows from Lemma \ref{XZ} that $\Ext_{\Lambda}^t(\begin{pmatrix}E\\ F\end{pmatrix}_{\Psi}, \begin{pmatrix}0\\ F\end{pmatrix})\cong \Ext_B^t(F, F)$.

Hence above long exact sequence becomes

$$\cdots\rightarrow\Ext_A^{c+d+1-t}(E, E) \rightarrow\Ext_{\Lambda}^t(\begin{pmatrix}E\\ F\end{pmatrix}_{\Psi}, \begin{pmatrix}E\\ F\end{pmatrix}_{\Psi})\rightarrow\Ext_B^t(F, F)\rightarrow\ \Ext_A^{c+d-t}(E,E)\rightarrow \cdots$$

Assume that $c\ne 0$ and $d\ne 0$ (the other three cases: $\text{(i)}\ c=0, d\ne 0;$ $\text{(ii)}\ c\ne 0, d=0;$ $\text{(iii)}\ c=0, d=0$ are proved similarly).

If $c+d+1-t\ne 0, c+d+1-t\ne c$ and $t\ne 0, t\ne d$, then $\Ext_A^{c+d+1-t}(E, E)=0$ and $\Ext_B^t(F, F)=0$, it follows that $\Ext_{\Lambda}^t(\begin{pmatrix}E\\ F\end{pmatrix}_{\Psi}, \begin{pmatrix}E\\ F\end{pmatrix}_{\Psi})=0$.

If $c+d+1-t=0$, then $\Ext_A^{c+d+1-t}(E, E)=k$ and $t\ne 0,\ t\ne d$. Hence one has $\Ext_B^t(F, F)=0$ and $\Ext_B^{t-1}(F, F)=0$. It follows that $\Ext_{\Lambda}^t(\begin{pmatrix}E\\ F\end{pmatrix}_{\Psi}, \begin{pmatrix}E\\ F\end{pmatrix}_{\Psi})=k$.

If $c+d+1-t=c$, then $\Ext_A^{c+d+1-t}(E, E)=k,\ \Ext_B^{t-1}(F, F)=k$, so
$\Ext_{\Lambda}^t(\begin{pmatrix}E\\ F\end{pmatrix}_{\Psi}, \begin{pmatrix}E\\ F\end{pmatrix}_{\Psi})=0$.

If $t=0$, then $\Ext_B^t(F, F)=k, \Ext_A^{c+d+1-t}(E, E)=0$, hence $\Ext_{\Lambda}^t(\begin{pmatrix}E\\ F\end{pmatrix}_{\Psi}, \begin{pmatrix}E\\ F\end{pmatrix}_{\Psi})=k$.

If $t=d$, then $\Ext_B^t(F, F)=k, \Ext_A^{c+d-t}(E, E)=k$, it follows that $\Ext_{\Lambda}^t(\begin{pmatrix}E\\ F\end{pmatrix}_{\Psi}, \begin{pmatrix}E\\ F\end{pmatrix}_{\Psi})=0$.

Thus
$$\Ext_{\Lambda}^t(\begin{pmatrix}E\\ F\end{pmatrix}_{\Psi}, \begin{pmatrix}E\\ F\end{pmatrix}_{\Psi})=\begin{cases}k  &  \text{if}\ t=0; \\ k & \text{if}\ t=c+d+1;\\0 &\text{otherwise.} \end{cases}$$

By Lemma \ref{iff1}, $\begin{pmatrix}E\\F\end{pmatrix}_{\Psi}$ is an exceptional $1$-cycle with
$S_\Lambda\begin{pmatrix}E\\ F \end{pmatrix}_{\Psi}\cong \begin{pmatrix}E\\ F \end{pmatrix}_{\Psi}[c+d+1]$.
\end{proof}

\begin{rem}
Note that Lemma \ref{1x1} is a special case of main result of \cite[Theorem 1.0.1]{Ba}, in this case, we give a short different proof.
\end{rem}

\begin{thm}  \label{product}
Let $A$ and $B$ be Gorenstein algebras, $E_*=(E_1, E_2, \cdots, E_n)$ a perfect exceptional $n$-cycle with respect to $(c_1, c_2, \cdots, c_n)$ in $K^b(A\mbox{-}{\rm proj})$, $F_*=(F_1, F_2, \cdots, F_m)$ a perfect exceptional $m$-cycle with respect to $(d_1, d_2, \cdots, d_m)$ in $K^b(B\mbox{-}{\rm proj})$. Assume that $\Lambda=\begin{pmatrix}A & N\\ 0 & B \end{pmatrix}$ with $A$-$B$-bimodule $N=E_n\otimes_k D(F_1)$. Then $E_*\mathop{\boxtimes} F_*$ is an exceptional $(n+m-1)$-cycle with respect to $(c_1,\cdots,c_{n-1}, d_1, \cdots, d_{m-1}, c_n+d_m+1)$.
\end{thm}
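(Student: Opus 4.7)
The plan is to verify conditions (E1$'$), (E2), and (E3$'$) of Definition~\ref{excep}$'$ for the sequence $G_* := E_* \boxtimes F_*$ of length $n+m-1$. Condition (E2) is already supplied by Proposition~\ref{S4}, so only the Hom-computation and Hom-vanishing statements remain. The boundary case $n = m = 1$ is precisely Lemma~\ref{1x1}, so I may assume $n+m \ge 3$ and split into the main case $n \ge 2$ and the remaining case $n = 1$, $m \ge 2$.

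In the main case $n \ge 2$ (which also covers $m = 1$), the first object is the pure ``left-column'' module $G_1 = \begin{pmatrix}E_1\\0\end{pmatrix}$. For (E1$'$), Lemma~\ref{XZ}(1) gives
$$\Ext_\Lambda^t(G_1, G_1) \cong \Ext_A^t(E_1, E_1),$$
which by Lemma~\ref{iff2} is one-dimensional in degree $0$ and zero otherwise; hence $\Hom^{\bullet}(G_1, G_1) \cong k$. For (E3$'$) I would fix $j \ge 3$ and split by the type of $G_j$. If $G_j$ is again a left-column object $\begin{pmatrix}E_j\\0\end{pmatrix}$ (with $3 \le j \le n-1$), or the mixed object $G_n = \begin{pmatrix}E_n\\F_1\end{pmatrix}_\Psi$ (only reached when $n \ge 3$), Lemma~\ref{XZ}(1) reduces $\Ext_\Lambda^t(G_1, G_j)$ to $\Ext_A^t(E_1, X)$ with $X \in \{E_3, \ldots, E_n\}$, which vanishes by Lemma~\ref{iff2}. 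If instead $G_j = \begin{pmatrix}0\\F_i\end{pmatrix}$, Lemma~\ref{XZ}(2) yields $\Ext_\Lambda^t(G_1, G_j) \cong \Ext_B^t(0, F_i) = 0$. Together these subcases cover every $j \ge 3$, so (E1$'$) and (E3$'$) hold.

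The main obstacle is the remaining subcase $n = 1$, $m \ge 2$, where the first object is the mixed $G_1 = \begin{pmatrix}E_1\\F_1\end{pmatrix}_\Psi$ and neither part of Lemma~\ref{XZ} applies directly to $\Ext_\Lambda^t(G_1, G_1)$. Condition (E3$'$) is still immediate: for $j \ge 3$ we have $G_j = \begin{pmatrix}0\\F_j\end{pmatrix}$, and Lemma~\ref{XZ}(2) gives $\Ext_\Lambda^t(G_1, G_j) \cong \Ext_B^t(F_1, F_j) = 0$ by Lemma~\ref{iff2}. For (E1$'$), I would imitate the strategy of Lemma~\ref{1x1}: apply $\Hom_\Lambda(G_1, -)$ to the short exact sequence
$$0 \longrightarrow \begin{pmatrix}E_1\\0\end{pmatrix} \longrightarrow \begin{pmatrix}E_1\\F_1\end{pmatrix}_\Psi \longrightarrow \begin{pmatrix}0\\F_1\end{pmatrix} \longrightarrow 0,$$
compute the quotient term via Lemma~\ref{XZ}(2) as $\Ext_B^t(F_1, F_1)$, and re-express the submodule term $\Ext_\Lambda^t(G_1, \begin{pmatrix}E_1\\0\end{pmatrix})$ by invoking the Serre functor on $K^b(\Lambda\mbox{-}{\rm proj})$ together with the periodicity provided by Proposition~\ref{S4} and then Lemma~\ref{XZ}(1), so that it reduces to an $\Ext_A^\bullet(E_1, E_1)$ computation governed by Lemma~\ref{iff2}. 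Reading the two outer vanishings off the resulting long exact sequence forces $\Hom^{\bullet}(G_1, G_1) \cong k$, completing (E1$'$).
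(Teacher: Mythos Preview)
Your argument is correct and follows the paper's proof essentially verbatim: invoke Proposition~\ref{S4} for (E2), dispose of $n=m=1$ via Lemma~\ref{1x1}, and for $n\ge 2$ reduce all the relevant $\Ext_\Lambda$-groups to $\Ext_A$ or $\Ext_B$ via Lemma~\ref{XZ} exactly as you describe. The paper treats only $n\ge 2,\ m\ge 2$ in detail and dismisses the two mixed cases as ``similar'', so your explicit treatment of $n=1,\ m\ge 2$ is in fact more complete than the paper's.

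One small correction in that last subcase: when you Serre-dualise $\Ext_\Lambda^t\bigl(G_1,\begin{pmatrix}E_1\\0\end{pmatrix}\bigr)$ and feed in Proposition~\ref{S4}, you get $S_\Lambda G_1\cong \begin{pmatrix}0\\F_2\end{pmatrix}[d_1]$, so Lemma~\ref{XZ}(1) lands you in $\Ext_A^{d_1-t}(E_1,0)=0$, not in $\Ext_A^\bullet(E_1,E_1)$ (and Lemma~\ref{iff2} would not apply to $E_1$ anyway, since here $n=1$). This makes the argument cleaner than you sketched: the left-hand term of the long exact sequence vanishes identically, whence $\Ext_\Lambda^t(G_1,G_1)\cong \Ext_B^t(F_1,F_1)$, and Lemma~\ref{iff2} applied to $F_*$ (valid since $m\ge 2$) finishes (E1$'$) at once.
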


\begin{proof}

According to Proposition \ref{S4}, the product $E_*\mathop{\boxtimes} F_*$ is $S_{\Lambda}$-period with respect to the sequence of positive integers $(c_1,\cdots,c_{n-1}, d_1, \cdots, d_{m-1}, c_n+d_m+1)$ in $K^b(\Lambda\mbox{-}{\rm proj})$. By Lemma \ref{iff1} and Lemma \ref{iff2}, it remains to compute the $\Ext$-spaces of these $\Lambda$-modules.

The case $n=1, m=1$ holds by Lemma \ref{1x1}.

Assume that $n\ge 2, m\ge 2$ (the other two cases: $\text{(i)}\ n=1, m\ge 2; \ \text{(ii)}\ n\ge 2, m=1$ are proved similarly).

Since $E_*=(E_1, E_2, \cdots, E_n)$ is a perfect exceptional $n$-cycle with respect to $(c_1, c_2, \cdots, c_n)$, hence one has
$$\Ext^t_A(E_1, E_j)\cong
\begin{cases}
k, & {\rm if}\ j=1,\ t=0; \\
k,  & {\rm if}\  j=2,\ t=c_1;\\
0, & {\rm otherwise.}
\end{cases}$$

By Lemma \ref{XZ}, $\Ext^t_{\Lambda}(\begin{pmatrix}E_1\\ 0\end{pmatrix}, \begin{pmatrix}E_j\\ 0\end{pmatrix})\cong \Ext^t_{A}(E_1, E_j)$ for $1\le j\le n-1$, $\Ext^t_{\Lambda}(\begin{pmatrix}E_1\\ 0\end{pmatrix}, \begin{pmatrix}E_n\\ F_1\end{pmatrix}_{\Psi})\cong \Ext^t_{A}(E_1, E_n)$ and
$\Ext^t_{\Lambda}(\begin{pmatrix}E_1\\ 0\end{pmatrix}, \begin{pmatrix}0\\ F_j\end{pmatrix})=0$ for $1\le j \le m$.

Due to Lemma \ref{iff2}, $E_*\mathop{\boxtimes} F_*$ is an exceptional $(n+m-1)$-cycle with respect to $(c_1,\cdots,c_{n-1}, d_1, \cdots$, $d_{m-1}, c_n+d_m+1)$.
\end{proof}

\begin{ex}
Consider the following three algebras $A=kQ_1/I_1,\ B=kQ_2/I_2,\ \Lambda=kQ_3/I_3$, where $Q_1, Q_2, Q_3$ are given below,
$I_1=\langle \alpha^3 \rangle,\ I_2=\langle \beta^3 \rangle,\ I_3=I_1\cup I_2$.

\begin{picture}(190,140)
\put(0,50){$\bullet$}   \put(0,90){$\bullet$}  \put(40,50){$\bullet$}  \put(40,90){$\bullet$}

\put(-6,44){$4$}   \put(-6,96){$1$}  \put(46,44){$3$}  \put(46,96){$2$}

\put(-12,70){$\alpha$}   \put(20,98){$\alpha$}  \put(48,70){$\alpha$}  \put(20,44){$\alpha$}

\put(20,0){$Q_1$}   \put(140,0){$Q_2$}    \put(320,0){$Q_3$}

\put(6,93){\vector(1,0){33}}   \put(43,89){\vector(0,-1){33}}     \put(39,52){\vector(-1,0){33}}   \put(2,56){\vector(0,1){33}}


\put(100,70){$\bullet$}  \put(119,108){$\bullet$}    \put(165,108){$\bullet$}  \put(184,70){$\bullet$}  \put(119,32){$\bullet$}  \put(165,32){$\bullet$}

\put(90,70){$1'$}  \put(119,116){$2'$}    \put(165,116){$3'$}  \put(192,70){$4'$}  \put(119,22){$6'$}  \put(165,22){$5'$}

\put(100,93){$\beta$}  \put(139,116){$\beta$}    \put(180,96){$\beta$}  \put(180,50){$\beta$}  \put(99,42){$\beta$}  \put(142,22){$\beta$}

\put(104,76){\vector(1,2){16}}  \put(124,111){\vector(1,0){40}}    \put(170,108){\vector(1,-2){16}}   \put(186,70){\vector(-1,-2){16}}
\put(165,34){\vector(-1,0){40}}   \put(119,36){\vector(-1,2){16}}


\put(250,50){$\bullet$}   \put(250,90){$\bullet$}  \put(290,50){$\bullet$}  \put(290,90){$\bullet$}

\put(246,42){$4$}   \put(246,96){$1$}  \put(295,42){$3$}  \put(290,98){$2$}

\put(238,68){$\alpha$}   \put(268,98){$\alpha$}  \put(272,44){$\alpha$}  \put(296,70){$\alpha$}

\put(256,93){\vector(1,0){33}}   \put(293,89){\vector(0,-1){33}}     \put(289,52){\vector(-1,0){33}}   \put(252,56){\vector(0,1){33}}

\put(330,70){$\bullet$}  \put(349,108){$\bullet$}    \put(395,108){$\bullet$}  \put(414,70){$\bullet$}  \put(349,32){$\bullet$}  \put(395,32){$\bullet$}

\put(326,78){$1'$}  \put(349,116){$2'$}    \put(395,116){$3'$}  \put(422,70){$4'$}  \put(349,22){$6'$}  \put(395,22){$5'$}

\put(332,96){$\beta$}  \put(369,116){$\beta$}    \put(409,96){$\beta$}  \put(410,50){$\beta$}  \put(333,42){$\beta$}  \put(375,22){$\beta$}

\put(312,88){$\gamma$}

\put(334,76){\vector(1,2){16}}  \put(354,111){\vector(1,0){40}}    \put(400,108){\vector(1,-2){16}}   \put(416,70){\vector(-1,-2){16}}
\put(395,34){\vector(-1,0){40}}   \put(349,36){\vector(-1,2){16}}    \put(328,72){\vector(-3,2){32}}
\end{picture}

\vskip15pt

Note that $(P(4), P(2))$ is an exceptional $2$-cycle in $K^b(A\mbox{-}{\rm proj})$, $(P(5'), P(3'), P(1'))$ an exceptional $3$-cycle in $K^b(B\mbox{-}{\rm proj})$, where $P(i)$ is the indecomposable projective modules corresponding to the vertex $i$.
It is clear that $\Lambda$ is the upper triangular matrix $\begin{pmatrix}A&  P(2)\otimes_kD(P(5'))\\0 & B\end{pmatrix}$, and $(P(4), P(5'), P(3'), P(1'))$ is an exceptional $(2+3-1)$-cycle in $K^b(\Lambda\mbox{-}{\rm proj})$.
However, algebra $\Lambda$ is a string algebra $($but not gentle$)$. So this is a new exceptional cycle that we do not know before.
\end{ex}

As a consequence we obtain the following description of (co)extension exceptional cycle of a given perfect exceptional cycle. Compare this result with
\cite[Proposition 3.5]{G}. Notice that $(k, k)$ is the unique exceptional cycle in $D^b(k)$ up to shift at each position.

\begin{cor}
Let $A$ be a Gorenstein algebra, $E_*=(E_1, E_2, \cdots, E_n)$ a perfect exceptional $n$-cycle in $K^b(A\mbox{-}{\rm proj})$.

$(1)$ If $\Lambda=\begin{pmatrix}A & E_n\\ 0 & k \end{pmatrix}$, then $E_*\boxtimes (k,k)$ is an exceptional $(n+1)$-cycle in $K^b(\Lambda\mbox{-}{\rm proj})$. In this case, $E_*\boxtimes (k,k)$ is called the extension of $E_*$.

\vskip5pt

$(2)$ If $\Lambda=\begin{pmatrix}k &  D(E_1)\\ 0 & A \end{pmatrix}$, then $(k,k)\boxtimes E_*$ is an exceptional $(n+1)$-cycle in $K^b(\Lambda\mbox{-}{\rm proj})$. In this case, $(k,k)\boxtimes  E_*$  is called the coextension of $E_*$.
\end{cor}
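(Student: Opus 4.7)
The plan is to deduce both parts from Theorem \ref{product} by specializing one of the two input algebras to be the ground field $k$. So the real content reduces to checking that $(k, k)$ is itself a perfect exceptional $2$-cycle in $K^b(k\mbox{-}{\rm proj})$, and then to identifying the bimodule $N$ produced by Definition \ref{prod} with the one appearing in the statement of the corollary.

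First I would verify that $(k, k)$ is a perfect exceptional $2$-cycle in $K^b(k\mbox{-}{\rm proj})$ with shift sequence $(0, 0)$. The algebra $k$ is trivially Gorenstein and its Nakayama functor is the identity, so $S_k(k) \cong k[0]$, which gives condition $(\mathrm{E2})$ with $d_1 = d_2 = 0$. Condition $(\mathrm{E1})$ amounts to $\Hom^{\bullet}(k, k) \cong k$, which is immediate, and $(\mathrm{E3})$ is vacuous for $n = 2$; moreover both entries are $k$-modules, so the cycle is perfect in the sense of Definition \ref{perfect}.

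For part $(1)$, I would then apply Theorem \ref{product} with $B = k$ and $F_* = (k, k)$. The bimodule required by Definition \ref{prod} is $N = E_n \otimes_k D(F_1) = E_n \otimes_k D(k) \cong E_n$ as an $A$-$k$-bimodule, so $\Lambda$ agrees with $\begin{pmatrix} A & E_n \\ 0 & k \end{pmatrix}$ from the statement. Theorem \ref{product} then yields an exceptional $(n + 2 - 1) = (n + 1)$-cycle $E_* \boxtimes (k, k)$ in $K^b(\Lambda\mbox{-}{\rm proj})$, with shift sequence $(c_1, \ldots, c_{n-1}, 0, c_n + 1)$. Part $(2)$ is the mirror situation: I would take $A = k$ and $E_* = (k, k)$ in Theorem \ref{product}, so that the ``$E_n$'' of that theorem is $k$ and its ``$F_1$'' is $E_1$. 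This gives $N = k \otimes_k D(E_1) \cong D(E_1)$ and hence $\Lambda = \begin{pmatrix} k & D(E_1) \\ 0 & A \end{pmatrix}$, exactly as in the statement; Theorem \ref{product} then produces the exceptional $(n + 1)$-cycle $(k, k) \boxtimes E_*$.

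There is essentially no obstacle here: the corollary is a direct specialization of Theorem \ref{product} once $(k, k)$ is recognized as a perfect exceptional $2$-cycle, and all of the cohomological work has already been carried out in Proposition \ref{S4} and Theorem \ref{product}. The only minor bookkeeping concern is checking that the shift sequences make sense in the boundary cases (one-sided cycles $n=1$ or where the length of the product cycle is small), but these are precisely the cases treated in Lemma \ref{1x1} and in the proof of Theorem \ref{product}, so no new argument is required.
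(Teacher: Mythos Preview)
Your proposal is correct and matches the paper's approach exactly: the paper presents the corollary as a direct consequence of Theorem \ref{product}, explicitly noting just before the statement that $(k,k)$ is the unique exceptional cycle in $D^b(k)$ up to shift, which is precisely the observation you identify as the key input. Your verification that $(k,k)$ is a perfect exceptional $2$-cycle and your identification of the bimodules $N$ in each case simply make explicit what the paper leaves implicit.
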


\end{document}